\theoremstyle{theorem}
\newtheorem{theorem}{Theorem}[section]
\newtheorem{proposition}[theorem]{Proposition}
\newtheorem{lemma}[theorem]{Lemma}
\newtheorem{question}[theorem]{Question}
\newtheorem{corollary}[theorem]{Corollary}
\theoremstyle{definition}
\newtheorem{remark}[theorem]{Remark}
\newcommand{\F}{\mathcal{F}}
\newcommand{\RP}{\mathbb{RP}}
\newcommand{\D}{\mathcal D}
\newcommand{\Kk}{\mathcal K}
\newcommand{\Tt}{\mathcal T}
\newcommand{\pd}{\partial}
\newcommand{\emp}{\emptyset}
\newcommand{\X}{\times}
\newcommand{\be}{\begin{enumerate}}
\newcommand{\ee}{\end{enumerate}}
\newcommand{\Pau}{\mathcal{P}}
\newcommand{\K}{\mathcal{K}}
\newcommand{\T}{\mathcal{T}}
\def\@seccntformat#1{%
  \protect\textup{\protect\@secnumfont
    \ifnum\pdfstrcmp{subsection}{#1}=0 \bfseries\fi
    \csname the#1\endcsname
    \protect\@secnumpunct
  }%
}  
\theoremstyle{theorem}
\newtheorem*{rep@theorem}{\rep@title}
\newcommand{\newreptheorem}[2]{%
\newenvironment{rep#1}[1]{%
 \def\rep@title{#2 \ref{##1}}%
 \begin{rep@theorem}}%
 {\end{rep@theorem}}}
\begin{document}

\rhead{\thepage}
\lhead{\author}
\thispagestyle{empty}


\raggedbottom
\pagenumbering{arabic}
\setcounter{section}{0}


\title{Cubic graphs induced by bridge trisections}

\author{Jeffrey Meier}
\address{Department of Mathematics \\ Western Washington University, Bellingham, WA 98225}
\email{jeffrey.meier@wwu.edu}
\urladdr{\url{https://jeffreymeier.org}}

\author{Abigail Thompson}
\address{Department of Mathematics \\ University of California, Davis, Davis, CA 95616}
\email{thompson@math.ucdavis.edu}
\urladdr{\url{https://www.math.ucdavis.edu/~thompson/}}

\author{Alexander Zupan}
\address{Department of Mathematics, University of Nebraska-Lincoln, Lincoln, NE 68588}
\email{zupan@unl.edu}
\urladdr{\url{http://www.math.unl.edu/~azupan2}}

\begin{abstract}
	Every embedded surface $\mathcal{K}$ in the 4-sphere admits a bridge trisection, a decomposition of $(S^4,\mathcal{K})$ into three simple pieces.  In this case, the surface $\mathcal{K}$ is determined by an embedded 1-complex, called the \emph{1-skeleton} of the bridge trisection.  As an abstract graph, the 1-skeleton is a cubic graph $\Gamma$ that inherits a natural Tait coloring, a 3-coloring of the edge set of $\Gamma$ such that each vertex is incident to edges of all three colors.  In this paper, we reverse this association:  We prove that every Tait-colored cubic graph is isomorphic to the 1-skeleton of a bridge trisection corresponding to an unknotted surface.  When the surface is nonorientable, we show that such an embedding exists for every possible normal Euler number.  As a corollary, every tri-plane diagram for a knotted surface can be converted to a tri-plane diagram for an unknotted surface via crossing changes and interior Reidemeister moves.
\end{abstract}

\maketitle

\section{Introduction}\label{intro}

A graph $\Gamma$ is \emph{cubic} if each of its vertices has valence three.  A \emph{Tait coloring} of a cubic graph is a function $\mathcal C$ from the edge set of $\Gamma$ to the set $\{\text{red}, \text{blue}, \text{green}\}$ such that each vertex is incident to one edge of each color.  Bridge trisections of knotted surfaces in $S^4$ were defined by the first and third authors in~\cite{MZB1} and extended to knotted surfaces in arbitrary 4-manifolds in~\cite{MZB2}.  A \emph{bridge trisection} $\mathcal T$ of a knotted surface $\mathcal K \subset S^4$ is a decomposition
\[ (S^4,\mathcal K) = (X_1,\mathcal D_1) \cup (X_2,\mathcal D_2) \cup (X_3, \mathcal D_3),\]
where, for each $i\in\{1,2,3\}$, $\mathcal D_i$ is a collection of trivial disks in the 4-ball $X_i$, and the pairwise intersection $\tau_{ij} = \D_i \cap \D_j$ is a trivial tangle in the 3-ball $B_{ij} = X_i \cap X_j$.
It follows that the triple intersection $\D_1 \cap \D_2 \cap \D_3$ is a collection $\bold x$ of \emph{bridge points} in the \emph{bridge sphere} $\Sigma = X_1 \cap X_2 \cap X_3$, where $\Sigma$ is a 2-sphere.

The union $\Gamma = \tau_{12} \cup \tau_{23} \cup \tau_{31}$ along the points $\bold x$ is a 1-complex (that is, a graph), which we will call the \emph{1-skeleton} associated to $\mathcal T$.  Observe that $\Gamma$ is cubic, and it has a natural Tait coloring $\mathcal C$ obtained by coloring the arcs of $\tau_{12}$ red, the arcs of $\tau_{23}$ blue, and the arcs of $\tau_{31}$ green.  We say that the coloring $\mathcal C$ of $\Gamma$ is \emph{induced} by $\mathcal T$.

In this paper, we prove that the correspondence between bridge trisections and Tait-colored cubic graphs can be reversed:  Given a cubic graph $\Gamma$ with a Tait coloring $\mathcal C$, the subgraph induced by any pair of colors is a collection of disjoint cycles, and attaching 2-cells along every cycle for each of the three pairings gives rise to a surface $S$, which we call the surface \emph{induced by} $\mathcal{C}$.

\begin{theorem}
\label{main}
	If $\Gamma$ is a cubic graph with a Tait coloring $\mathcal C$, then there exists a bridge trisection $\mathcal T$ of an unknotted surface $\mathcal{U} \subset S^4$ such that the 1-skeleton $\mathcal T$ is graph isomorphic to $\Gamma$, with the coloring $\mathcal C$ induced by $\mathcal T$.  Moreover, if the induced surface $S$ is nonorientable, we may choose the embedding of $\mathcal{U}$ to have any possible normal Euler number.
\end{theorem}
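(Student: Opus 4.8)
The plan is to build the desired bridge trisection directly from the combinatorics of $(\Gamma,\mathcal C)$. Observe that the induced surface $S$ carries a natural CW structure whose $0$-cells are the vertices of $\Gamma$, whose $1$-cells are the edges, and whose $2$-cells are the cycles of the three two-colored subgraphs. Grouping the $2$-cells into three families by color pair and thickening each family inside $S$ yields three systems of disks $\D_1,\D_2,\D_3$ (from the red-green, red-blue, and blue-green cells, respectively) with $\D_1\cap\D_2$ the red edges, $\D_2\cap\D_3$ the blue edges, $\D_1\cap\D_3$ the green edges, and $\D_1\cap\D_2\cap\D_3$ the vertices. Realizing the theorem then amounts to embedding $S$ in $S^4$, compatibly with the standard genus-$0$ trisection $S^4=X_1\cup X_2\cup X_3$, so that $X_i\cap S=\D_i$; the tangles $\tau_{12},\tau_{23},\tau_{31}$ become the red, blue, and green edges, and the bridge points become the vertices.

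Since a disjoint union of graphs gives a split union of bridge trisections, and a split union of unknotted surfaces is unknotted, I would first reduce to the case that $\Gamma$ is connected. Next, since the red-blue subgraph is a disjoint union of cycles and hence planar, I would realize it as a collection of disjoint, nearly flat circles in a bridge sphere $\Sigma=S^2$, with the red edges pushed slightly into $B_{12}$ and the blue edges pushed slightly into $B_{23}$. This pins down the bridge points $\mathbf x\subset\Sigma$ and the trivial tangles $\tau_{12}$ and $\tau_{23}$, and makes $\tau_{12}\cup\tau_{23}$ an unlink bounding an evident system of trivial disks $\D_2$. The only remaining freedom is the green tangle $\tau_{31}$: a trivial tangle in the third ball $B_{31}$ inducing the prescribed green matching on $\mathbf x$. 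The core of the argument is to choose $\tau_{31}$ so that $\tau_{12}\cup\tau_{31}$ and $\tau_{23}\cup\tau_{31}$ are both unlinks and the resulting surface $\mathcal U=\D_1\cup\D_2\cup\D_3$ is unknotted. I would do this by routing each green arc as a bridge arc through $B_{31}$ running alongside the red-blue circles it must join, so that every red-green and every blue-green cycle visibly bounds a disk and these disks can be taken disjoint; the unknottedness of $\mathcal U$ would then follow by exhibiting an explicit sequence of compressions and ambient isotopies carrying $(S^4,\mathcal U)$ to a standard embedding, equivalently by recognizing the resulting banded unlink as trivial.

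For the final clause, recall that changing a crossing in a tri-plane diagram for $\mathcal U$ does not affect the underlying shadow, hence leaves the Tait-colored graph $\Gamma$ unchanged, while for nonorientable $S$ it changes the normal Euler number by $\pm 4$. I would arrange the construction so that it contains a local twist region for each projective-plane summand of $S\cong N_k$, each twist invisible to the shadow; the resulting signs are free to choose, and they realize precisely the set of Euler numbers allowed by Massey's constraints $|e|\le 2k$ and $e\equiv 2k\pmod 4$, which is the full set of values admissible for an embedding of $N_k$ in $S^4$.

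The step I expect to be the main obstacle is the simultaneous realization above. If $\Gamma$ were planar one could take all three tangles flat in $\Sigma$ and every condition would be transparent, but in general $\Gamma$ is nonplanar, so the green arcs must weave over the red-blue circles; one must then control this weaving so that the two new two-colored links are \emph{both} unlinks, so that $\mathcal U$ remains unknotted, \emph{and} so that the prescribed green matching is respected, and these requirements pull against one another. (One may use the fact that an innermost bigon in $\Gamma$ corresponds to a perturbation, hence can be removed without changing the surface, to first simplify $\Gamma$; but bigon-free Tait-colored cubic graphs such as $K_4$ exist, so this does not suffice on its own and a direct construction is unavoidable.)
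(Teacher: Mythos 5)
Your proposal sets up the problem correctly (the CW structure on $S$, the role of the three disk systems, the reduction of everything to choosing the third tangle), but it stops exactly where the proof has to begin. The step you flag as ``the main obstacle''---choosing $\tau_{31}$ so that $\tau_{12}\cup\tau_{31}$ and $\tau_{23}\cup\tau_{31}$ are both unlinks, the green matching is the prescribed one, \emph{and} the resulting surface is unknotted---is the entire content of the theorem, and ``routing each green arc alongside the red--blue circles it must join'' is not an argument: for a nonplanar $\Gamma$ (e.g.\ the Heawood graph) the green arcs must weave over one another and over the red--blue circles, and nothing in the proposal controls the resulting links or certifies triviality of the tangle. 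The paper resolves this not by a direct construction but by an induction: it defines a compression move on Tait-colored cubic graphs and proves (this is the technical heart, Proposition~\ref{ugh} and Lemma~\ref{totalcomp}) that every Tait-colored cubic graph compresses to the theta graph through a controlled sequence (p-compressions to reach a 1-patch graph, then alternating t- and p-compressions in the orientable case, or c-compressions staying nonorientable in the nonorientable case). Each compression is then undone at the level of shadow diagrams by one of three moves---elementary perturbation, tubing, or crosscap summation---each of which provably yields a bridge trisection and provably preserves unknottedness (Lemmas~\ref{crosscapping}, \ref{unknot}, \ref{tuber}). Without some such inductive or normal-form mechanism, your simultaneous realization has no proof.

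Your argument for the normal Euler number clause is also not sound. A crossing change in a tri-plane diagram generally destroys the tri-plane condition (the pairwise unions need no longer be unlinks; see Remark~\ref{rmk:crossings} of the paper), so you cannot freely insert ``local twist regions invisible to the shadow'' and conclude that the result is still a bridge trisection of an unknotted surface, nor that $e$ changes by exactly $\pm 4$ per twist. The paper instead gets the full range $\{-2g,-2g+4,\dots,2g\}$ by choosing, at each c-compression being undone, whether to perform a positive or a negative crosscap summation, i.e.\ whether to connect-sum with $\F^+$ or $\F^-$; additivity of $e$ under connected sum then gives every value allowed by the Whitney--Massey theorem.
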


To prove the main theorem, we first prove that every Tait-colored cubic graph $\Gamma$ admits a finite sequence of simplifications called \emph{compressions} yielding the theta graph. The theta graph is the 1-skeleton of the 1-bridge trisection of the unknotted 2-sphere.  Next, we show that there is a sequence of modifications to the 1-bridge trisection of the unknotted 2-sphere that undoes the sequence of compressions, eventually resulting in a bridge trisection of an unknotted surface inducing $\Gamma$.  One of the moves, elementary perturbation, is well-known (see~\cite{MZB1}), while the other two, crosscap summation and tubing, are new constructions that may be of independent interest.

To get a sense of the difficulty of the problem, the motivated reader is encouraged to attempt their own ad hoc construction of a bridge trisection with 1-skeleton isomorphic to either of the two examples shown in Figure~\ref{examples}.  The graph in Figure~\ref{ex1} is the famous Heawood graph~\cite{heawood}; it induces an orientable surface, so we call it $\Gamma_O$.  The graph in Figure~\ref{ex2} appears to be unnamed; it induces a nonorientable surface, so we call it $\Gamma_N$.  We will refer back to these examples throughout the paper.  In particular, the proof of the main theorem is constructive and is carried out for these two examples in Section~\ref{examp}.

\begin{figure}[h!]
\begin{subfigure}{.5\textwidth}
  \centering
  \includegraphics[width=.5\linewidth]{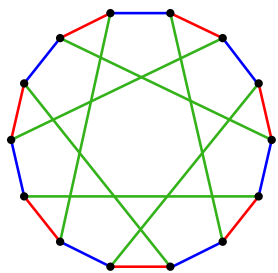}
  \caption{The Heawood graph $\Gamma_O$}
  \label{ex1}
\end{subfigure}%
\begin{subfigure}{.5\textwidth}
  \centering
  \includegraphics[width=.5\linewidth]{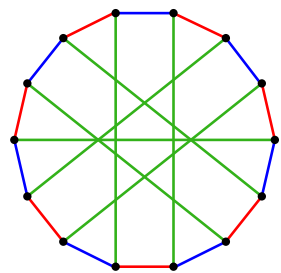}
  \caption{The nonorientable graph $\Gamma_N$}
  \label{ex2}
\end{subfigure}
\caption{Two examples of Tait-colored cubic graphs}
\label{examples}
\end{figure}

A bridge trisection $\mathcal T$ induces additional structure:  By choosing disks $E_{ij} \subset H_{ij}$ with a common boundary curve containing the bridge points $\bold x$, we can project $\tau_{ij}$ onto $E_{ij}$ to obtain a \emph{tri-plane diagram} $\Pau = (\Pau_{12},\Pau_{23},\Pau_{31})$, a triple of planar diagrams of trivial tangles such that any pairwise union $\Pau_{ij} \cup \overline\Pau_{jk}$ yields a classical diagram for an unlink.  Theorem~1.7 from~\cite{MZB1} asserts that any two tri-plane diagrams corresponding to the same bridge trisection $\mathcal T$ are related by interior Reidemeister moves and mutual braid transpositions.

For tri-plane diagrams, we can leverage Theorem~\ref{main} to obtain the following corollary.

\begin{corollary}\label{changes}
	Every tri-plane diagram $\mathcal P$ of a knotted surface $\mathcal K \subset S^4$ can be converted to a tri-plane diagram $\mathcal P'$ for an unknotted surface $\mathcal U$ by a sequence of interior Reidemeister moves and crossing changes.
\end{corollary}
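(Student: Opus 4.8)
The plan is to apply Theorem~\ref{main} to the $1$-skeleton underlying $\mathcal{P}$ and then to check that two tri-plane diagrams sharing a Tait-colored $1$-skeleton can be joined by interior Reidemeister moves and crossing changes. To set this up, let $\mathcal{T}$ be the bridge trisection of $\mathcal{K}$ determined by $\mathcal{P} = (\mathcal{P}_{12},\mathcal{P}_{23},\mathcal{P}_{31})$, realized with bridge points $\bold x$ lying on a common circle $c \subset \Sigma$ and with projection disks $E_{ij} \subset B_{ij}$ satisfying $\partial E_{ij} = c$; let $\Gamma$ be its $1$-skeleton with the induced Tait coloring $\mathcal{C}$. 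Theorem~\ref{main} supplies a bridge trisection $\mathcal{T}'$ of an unknotted surface $\mathcal{U} \subset S^4$ together with a color-preserving graph isomorphism $\phi$ from the $1$-skeleton of $\mathcal{T}'$ to $\Gamma$. I would first normalize: extend a homeomorphism of $\Sigma$ carrying the bridge points of $\mathcal{T}'$ to $\bold x$ according to $\phi^{-1}$ (any bijection of finite subsets of $S^2$ extends to a homeomorphism, which then extends over the standard trisection of $S^4$) and push $\mathcal{T}'$ forward, so that $\mathcal{T}'$ now has bridge points $\bold x$ and, for each color, the strands of $\tau'_{ij}$ join the points of $\bold x$ in the same pattern as those of $\tau_{ij}$. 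Projecting each $\tau'_{ij}$ — after a small isotopy rel $\partial$ into generic position — onto the same disk $E_{ij}$ produces a tri-plane diagram $\mathcal{P}' = (\mathcal{P}'_{12},\mathcal{P}'_{23},\mathcal{P}'_{31})$ of $\mathcal{U}$ whose bridge points occupy the same cyclic positions on $c$ as those of $\mathcal{P}$.

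It then suffices to connect $\mathcal{P}_{ij}$ to $\mathcal{P}'_{ij}$ for each $ij \in \{12,23,31\}$ independently by interior Reidemeister moves and crossing changes, since such moves alter only the one tangle diagram on which they act and fix the bridge points, and the intermediate diagrams need not be tri-plane diagrams. Fixing an ordering of the strands of color $ij$ (transported to both tangles via $\phi$), crossing changes convert any tangle diagram into one that is \emph{descending} with respect to that ordering; I would perform such changes to pass from $\mathcal{P}_{ij}$ to a descending diagram $D_{ij}$ and from $\mathcal{P}'_{ij}$ to a descending diagram $D'_{ij}$. A descending diagram represents a trivial tangle that, given the strand ordering and the placement of $\bold x$ on $\partial E_{ij}$, is determined up to isotopy rel $\partial$ by which points of $\bold x$ its strands join; by construction this combinatorial data agrees for $D_{ij}$ and $D'_{ij}$, so these are diagrams of trivial tangles that are isotopic rel $\partial$ and hence are related by interior Reidemeister moves. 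Composing (crossing changes $\mathcal{P}_{ij} \to D_{ij}$, interior Reidemeister moves $D_{ij} \to D'_{ij}$, reverse crossing changes $D'_{ij} \to \mathcal{P}'_{ij}$) connects $\mathcal{P}_{ij}$ to $\mathcal{P}'_{ij}$; carrying this out for all three colors converts $\mathcal{P}$ into $\mathcal{P}'$, a tri-plane diagram for the unknotted surface $\mathcal{U}$, which is the desired conclusion.

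The part I expect to demand the most care is the normalization in the first paragraph. A priori the $1$-skeleton is only an abstract Tait-colored graph and remembers neither how it sits in the bridge sphere nor the cyclic ordering of bridge points used to draw a tri-plane diagram, so one must use the freedom to re-choose the ambient identification, the bridge circle $c$, and the projection disks $E_{ij}$ in order to bring the two bridge trisections produced by Theorem~\ref{main} into a position in which all three pairs of tangle diagrams can be compared simultaneously and coherently; this is precisely what lets us avoid appealing to mutual braid transpositions (Theorem~1.7 of~\cite{MZB1}). Once the normalization is in place, the remaining ingredients — that a diagram of a trivial tangle can be made descending by crossing changes, and that descending diagrams with matching boundary combinatorics present tangles isotopic rel $\partial$ and so are joined by interior Reidemeister moves — are routine.
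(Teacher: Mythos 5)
Your proposal is correct and follows essentially the same route as the paper: invoke Theorem~\ref{main}, align the bridge points of the two bridge trisections so corresponding tangles have matching endpoint data, and then connect each pair of tangle diagrams sector by sector using crossing changes together with interior Reidemeister moves. The only cosmetic differences are that the paper matches bridge points via mutual braid transpositions on $\mathcal P'$ rather than by re-choosing the ambient identification, and it justifies the final step by projecting a generic homotopy between the tangles instead of passing through descending diagrams --- both are standard implementations of the same fact.
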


It is natural to wonder if Corollary~\ref{changes} remains true when Reidemeister moves are disallowed when converting the given diagram $\Pau$ to the diagram $\Pau'$ corresponding to the unknotted surface; see Remark~\ref{rmk:crossings} following the proof of Corollary~\ref{changes}.

\begin{question}
	Does every tri-plane diagram $\Pau$ admit a sequence of crossing changes converting it to a tri-plane diagram $\Pau'$ for an unknotted surface?
\end{question}

Finally, Kronheimer and Mrowka~\cite{KM} have recently outlined a plan to give a new topological proof of the famous Four Color Theorem~\cite{AH}.  We offer an alternate route, recalling that Tait's reformulation of the Four Color Theorem states that every bridgeless planar cubic graph admits a Tait coloring~\cite{Tait}. (See~\cite{Brown} for a topological proof of this reformulation.)

\begin{corollary}
The Four Color Theorem is equivalent to the assertion that every bridgeless planar cubic graph is isomorphic to the 1-skeleton of a bridge trisection of an unknotted surface.
\end{corollary}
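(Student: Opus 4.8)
The plan is to deduce this corollary from Theorem~\ref{main} together with Tait's reformulation of the Four Color Theorem. Recall that Tait showed the Four Color Theorem is equivalent to the statement that every bridgeless planar cubic graph admits a Tait coloring. So it suffices to argue that the following two assertions are equivalent: (i) every bridgeless planar cubic graph admits a Tait coloring; and (ii) every bridgeless planar cubic graph is isomorphic to the 1-skeleton of a bridge trisection of an unknotted surface.

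First I would prove that (i) implies (ii). Let $\Gamma$ be a bridgeless planar cubic graph. By (i), $\Gamma$ admits a Tait coloring $\mathcal{C}$. Applying Theorem~\ref{main} to the pair $(\Gamma, \mathcal{C})$ produces a bridge trisection $\mathcal{T}$ of an unknotted surface $\mathcal{U} \subset S^4$ whose 1-skeleton is graph isomorphic to $\Gamma$, which is exactly the conclusion of (ii). Note that this direction does not even use planarity beyond what is needed to invoke (i) — Theorem~\ref{main} applies to \emph{any} Tait-colored cubic graph — so the implication is immediate.

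For the converse, I would prove that (ii) implies (i). Suppose every bridgeless planar cubic graph is isomorphic to the 1-skeleton of a bridge trisection of an unknotted surface, and let $\Gamma$ be such a graph with a bridge trisection $\mathcal{T}$ realizing it as a 1-skeleton. As observed in the introduction, the 1-skeleton of any bridge trisection comes equipped with a \emph{natural Tait coloring}: color the arcs of $\tau_{12}$ red, the arcs of $\tau_{23}$ blue, and the arcs of $\tau_{31}$ green; since $\Gamma$ is cubic with the triple points $\bold{x}$ as vertices, each vertex is incident to exactly one arc of each tangle, hence to one edge of each color. Transporting this coloring through the graph isomorphism gives a Tait coloring of $\Gamma$, establishing (i). Chaining the two implications with Tait's reformulation completes the proof.

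The content here is almost entirely bookkeeping: the two nontrivial inputs — Theorem~\ref{main} and Tait's theorem — are already available, so there is no real obstacle, only the need to state the logical equivalence cleanly. The one point deserving care is that the equivalence is between two \emph{universally quantified} statements about the class of bridgeless planar cubic graphs, so each implication must be verified graph-by-graph as above rather than for a single fixed $\Gamma$; phrasing the argument at the level of the whole class keeps this transparent.
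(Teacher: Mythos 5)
Your argument is correct and follows exactly the paper's route: both directions reduce to Tait's reformulation, with Theorem~\ref{main} supplying one implication and the natural Tait coloring induced on the 1-skeleton of any bridge trisection supplying the other. Your write-up just makes the quantifier bookkeeping more explicit than the paper's terse version.
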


\begin{proof}
	Suppose $\Gamma$ is a bridgeless planar cubic graph.  If $\Gamma$ is the 1-skeleton of a bridge trisection, then $\Gamma$ inherits a Tait coloring.  Conversely, suppose $\Gamma$ has a Tait coloring.  Then $\Gamma$ is the 1-skeleton of a bridge trisection of an unknotted surface by Theorem~\ref{main}.
\end{proof}


We proceed as follows:  In Section~\ref{prelim}, we set up some background material related to bridge trisections, knotted surfaces, and cubic Tait-colored graphs.  In Section~\ref{ops}, we discuss connected summation, elementary perturbation, crosscap summation, and tubing of bridge trisections, and we define compression of Tait-colored cubic graphs.  In Section~\ref{mainproof}, we prove Theorem~\ref{main} and Corollary~\ref{changes}.  Finally, in Section~\ref{examp}, we carry out the process described in the proof of Theorem~\ref{main} for the examples in Figure~\ref{examples}.

\subsection*{Acknowledgements}

The authors are grateful to the Banff International Research Station for hosting the workshop ``Unifying 4-dimensional knot theory," during which part of this work was completed.  JM is supported by NSF grant DMS-1933019, AT is supported by NSF grant DMS-1664587, and AZ is supported by NSF grants DMS-164578 and DMS-2005518. 

\section{Preliminaries}\label{prelim}

\subsection{Bridge trisections}


In~\cite{MZB1}, the first and third author proved that every knotted surface $(X,\Kk)$ admits a bridge trisection $\T$, which can be encoded by a \emph{shadow diagram}, a triple $(a,b,c)$ such that each of $a$, $b$, and $c$ is an embedded collection of pairwise disjoint arcs in $\Sigma$ resulting from pushing the trivial arcs in each pairwise intersection $(B_{ij},\tau_{ij})$ into $\Sigma$. For two simple examples of shadow diagrams, see Figure~\ref{rpfigs}.
 Any two shadow diagrams for the same bridge trisection are related by a sequence of shadow slides (replacing an arc with its band sum with boundary of a neighborhood of another shadow in the same set).
See Figure~\ref{nxb} for some examples of shadow slides.

\begin{figure}[h!]
\begin{subfigure}{.5\textwidth}
  \centering
  \includegraphics[width=.5\linewidth]{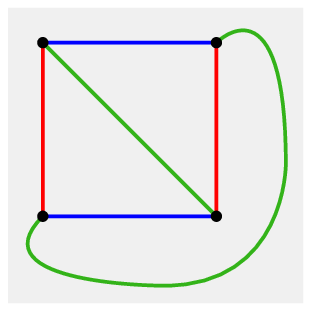}
  \caption{$\F^+$}
  \label{rpp}
\end{subfigure}%
\begin{subfigure}{.5\textwidth}
  \centering
  \includegraphics[width=.5\linewidth]{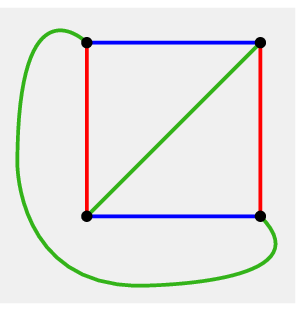}
  \caption{$\F^-$}
  \label{rpm}
\end{subfigure}
\caption{Examples of genus zero shadow diagrams.}
\label{rpfigs}
\end{figure}

\subsection{Unknotted surfaces}\label{unkn}

We say that an orientable surface $\K \subset S^4$ is \emph{unknotted} if $\K$ is the boundary of a smoothly embedded 3-dimensional handlebody.  Equivalently, $\K$ is unknotted if and only if $\K$ is isotopic into $S^3\subset S^4$~\cite[Theorem 1.2]{KH}. (This is analogous to the fact that a classical knot $K \subset S^3$ is the unknot if and only if $K$ is isotopic into $S^2\subset S^3$.)  For a nonorientable surface $\K$, the situation is slightly more complicated, but $\K$ is unknotted if $\K$ is \emph{almost} isotopic into $S^3\subset S^4$.  Each shadow diagram in Figure~\ref{rpfigs} corresponds to a bridge trisection of a embedding of $\RP^2$ into $S^4$.  We call these two embeddings $\F^+$ and $\F^-$ as shown; they are the two unknotted embeddings of $\RP^2$, where the normal Euler number $e(\F^{\pm})$ satisfies $e(\F^{\pm}) = \pm 2$.  Following~\cite{KH}, we say that a nonorientable surface $\K \subset S^4$ is \emph{unknotted} if $\K$ is isotopic to a connected sum of copies of $\F^+$ and $\F^-$.  Since normal Euler number is additive under connected sum, for unknotted surfaces with nonorientable genus $g$, we have $e(\K) \in \{-2g, -2g + 4,\dots 2g -4 , 2g\}$.  The Whitney-Massey Theorem asserts that the normal Euler number of any embedded surface of nonorientable genus $g$ also falls into this range~\cite{Massey}.

\subsection{Cubic graphs and surfaces}

All cubic graphs in this paper are assumed to be connected.  We allow our cubic graphs to have parallel edges.  If $\Gamma$ is not the theta graph -- i.e., the graph with two vertices and three parallel edges -- then each edge is parallel to at most one other edge since $\Gamma$ is cubic and connected.

Given a cubic graph $\Gamma$ with a Tait coloring $\mathcal{C}$, recall that $\Gamma$ and $\mathcal{C}$ give rise to the \emph{induced surface} $S$ obtained by attaching 2-cells to $\Gamma$ along each bi-colored cycle determined by $\mathcal{C}$.  See Figure~\ref{example1ab} below for an example.

\begin{figure}[h!]
\begin{subfigure}{.5\textwidth}
  \centering
  \includegraphics[width=.6\linewidth]{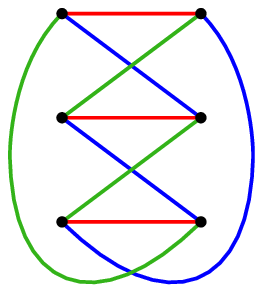}
  \caption{A Tait coloring $\mathcal{C}$ of $K_{3,3}$}
  \label{example1a}
\end{subfigure}%
\begin{subfigure}{.5\textwidth}
  \centering
  \includegraphics[width=.6\linewidth]{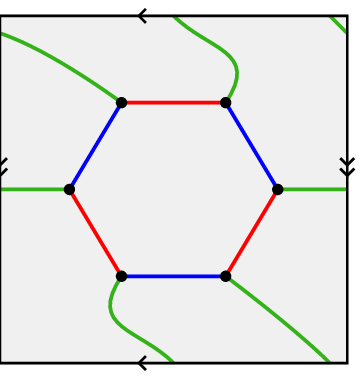}
  \caption{The surface induced by $K_{3,3}$ and $\mathcal{C}$}
  \label{example1b}
\end{subfigure}
\caption{An example of an induced surface.}
\label{example1ab}
\end{figure}

The \emph{patch numbers} $(p_1,p_2,p_3)$ of a Tait-colored cubic graph $\Gamma$ count the number of each type of bi-colored cycle.  If $p_1 = p_2 = p_3=p$, then we simply say that $\T$ is \emph{$p$-patch}.  Both examples $\Gamma_O$ and $\Gamma_N$ shown in Figure~\ref{examples} are 1-patch.  We also keep track of the orientability of the induced surface $S$, which can be verified by an easy condition, offered by the next lemma.

\begin{lemma}\label{orient}
Given a Tait-colored cubic graph $\Gamma$, the induced surface $S$ is orientable if and only if $\Gamma$ is bipartite.
\end{lemma}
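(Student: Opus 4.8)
The plan is to regard the induced surface $S$ as a cell complex whose $0$-cells are the vertices of $\Gamma$, whose $1$-cells are its edges, and whose $2$-cells are the bi-colored cycles. The whole argument rests on two elementary facts about a bi-colored cycle $C$ with color pair $\{i,j\}$: since every vertex of $\Gamma$ meets exactly one edge of each color, $C$ alternates between color-$i$ edges and color-$j$ edges; and since an edge of color $c$ lies on the $\{c,c+1\}$-cycle and the $\{c-1,c\}$-cycle through it (indices taken mod $3$) and on no other bi-colored cycle, each edge of $\Gamma$ lies on exactly two $2$-cells, appearing once on each. The link of every point of $S$ is then a circle, so $S$ is a closed surface, and it is orientable if and only if its $2$-cells admit a \emph{coherent} orientation, i.e.\ one for which each edge is given opposite orientations by the two $2$-cells containing it. So it suffices to show that a coherent orientation exists precisely when $\Gamma$ is bipartite.

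For the ``bipartite $\Rightarrow$ orientable'' direction, I would fix a bipartition $V(\Gamma)=A\sqcup B$ and cyclically order the colors $\text{red}\to\text{blue}\to\text{green}\to\text{red}$, so that each color pair $\{i,j\}$ acquires a first color $i$ and second color $j=i+1$. Given a bi-colored $2$-cell with ordered color pair $(i,j)$: its boundary cycle is even (as $\Gamma$ is bipartite) and its vertices alternate between $A$ and $B$, so exactly one of its two traversal directions sends every color-$i$ edge from its $A$-endpoint to its $B$-endpoint; declare that to be the orientation of this $2$-cell. Coherence is then immediate bookkeeping: an edge $e$ of color $c$ appears as a first-color edge of the $(c,c+1)$-cell, hence oriented $A\to B$ there, and as a second-color edge of the $(c-1,c)$-cell, hence oriented $B\to A$ there, and these two orientations on $e$ are opposite.

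For the ``orientable $\Rightarrow$ bipartite'' direction, fix an orientation of $S$. Near each vertex $v$ the orientation determines a cyclic order of the three incident edges, hence, via their colors, an element $\kappa(v)$ of the two-element set of cyclic orders of $\{\text{red},\text{blue},\text{green}\}$; set $f(v)\in\Z/2\Z$ to record whether $\kappa(v)$ agrees with a fixed reference cyclic order. The key local computation, carried out in an oriented disk neighborhood of $v$ in which the three edges appear as rays bounding the three sectors $P_{rb},P_{bg},P_{gr}$, shows that $\kappa(v)$ governs exactly which edge at $v$ each sector's induced boundary orientation exits along; in particular $\partial P_{rb}$ exits $v$ along its red edge if and only if $f(v)=0$. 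Now let $e=uv$ be an edge of color red. The boundary of the red-blue $2$-cell containing $e$ traverses $e$ in a single direction, so it exits along $e$ at exactly one of $u,v$; by the local computation this says $f(u)\neq f(v)$. The cyclic symmetry of the colors gives the same conclusion for blue and green edges, so $f$ is a proper $2$-coloring and $\Gamma$ is bipartite.

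I expect the main obstacle to be the local computation in the second direction: matching up the chosen orientation of $S$ near a vertex, the induced boundary orientations of the three incident $2$-cells, and the cyclic order of the three colors, all with mutually consistent conventions. The remaining ingredients---parity and alternation of bi-colored cycles, and the equivalence between orientability of a closed surface and coherent orientability of its top cells---are routine.
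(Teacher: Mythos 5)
Your proposal is correct and follows essentially the same route as the paper: both directions rest on coherently orienting the bi-colored $2$-cells using the color cyclic order and the alternation of the two vertex classes around each cycle, with the converse extracting a $\Z/2$-valued vertex label from the orientation via a local check at each vertex (the paper phrases this as each vertex being the head or the tail of all three incident edges, which is equivalent to your $\kappa(v)$ bookkeeping). The one step you flag as an "obstacle," the local computation at a vertex, is indeed routine and is handled in the paper by the same kind of convention-matching you describe.
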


\begin{proof}
First, suppose that $S$ is oriented.  The orientation of $S$ induces an orientation of each 2-cell bounded by the bi-colored cycles of $\Gamma$ with respect to $\mathcal{C}$.  Orient the red edges to agree with the orientation of the 2-cells bounded by red-blue cycles, orient the blue edges to agree with the orientation of the blue-green 2-cells, and orient the green edges to agree with the orientation of the green-red 2-cells.  Then the orientations of the blue edges, green edges, and red edges \emph{disagree} with the orientations of the red-blue 2-cells, blue-green 2-cells, and green-red 2-cells, respectively.  It follows that every vertex $v$ in $\Gamma$ is either at the head of a red, blue, and green edge, or $v$ is at the tail of a red, blue, and green edge.  Letting $V^+$ denote the vertices at the heads of a triple of edges and $V^-$ the vertices at the tails, we have that $\Gamma$ is bipartite.

Conversely, suppose that $\Gamma$ is bipartite, with vertices partitioned into $V^+$ and $V^-$.  Orient the edges of $\Gamma$ so that each has a vertex in $V^-$ at its tail and a vertex in $V^+$ at its head.  Finally, orient the red-blue 2-cells so that they agree with the orientations of the red edges, orient the blue-green 2-cells to agree with the orientations of the blue edges, and orient the green-red 2-cells to agree with the orientations of the green edges.  Then (as above) the orientations of blue edges, green edges, and red edges disagree with the orientations of the red-blue 2-cells, the blue-green 2-cells, and the green-red 2-cells, respectively.  This implies that 2-cells are glued along oppositely oriented edges, so that the orientations of the 2-cells agree wherever they overlap.  We conclude that their union, the surface $S$, is orientable.
\end{proof}

Following Lemma~\ref{orient}, we will say a Tait-colored cubic graph $\Gamma$ is \emph{orientable} if $\Gamma$ is bipartite and \emph{nonorientable} otherwise. 

\begin{remark}
Given a Tait-colored cubic graph $\Gamma$, the induced surface $S$ has a cell decomposition with $\Gamma$ as its 1-skeleton and with $p_1 + p_2 + p_3$ 2-cells; hence $\chi(S) = \chi(\Gamma) + p_1 + p_2 + p_3$.  The graph $\Gamma_O$ in Figure~\ref{ex1} is orientable and induces a surface $S_O$, whereas the graph $\Gamma_N$ in Figure~\ref{ex2} is nonorientable and induces $S_N$, satisfying $\chi(S_O) = \chi(S_N) = -4$.
\end{remark}

\begin{remark}
 A somewhat surprising consequence of Lemma~\ref{orient} is that while the Euler characteristic of the induced surface $S$ depends on a choice of Tait coloring, the orientability of $S$ depends only on the underlying graph.  In Figure~\ref{color}, we depict two different Tait colorings $\mathcal C$ and $\mathcal C'$ of a graph $\Gamma$, inducing surfaces $S$ and $S'$, respectively, with $\chi(S) = 2$ and $\chi(S') = 0$.  
\end{remark}

\begin{figure}[h!]
\begin{subfigure}{.5\textwidth}
  \centering
  \includegraphics[width=.5\linewidth]{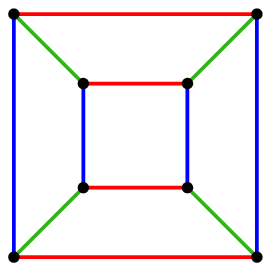}
  \caption{Tait coloring $\mathcal{C}$ of $\Gamma$ inducing a 2-sphere $S$}
  \label{color1}
\end{subfigure}%
\begin{subfigure}{.5\textwidth}
  \centering
  \includegraphics[width=.5\linewidth]{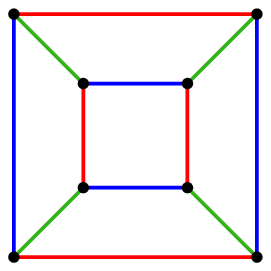}
  \caption{Tait coloring $\mathcal{C}'$ of $\Gamma$ inducing a torus $S'$}
  \label{color2}
\end{subfigure}
\caption{Distinct Tait colorings of $\Gamma$ inducing different surfaces}
\label{color}
\end{figure}

\begin{remark}
If $\Gamma$ is a cubic graph embedded in a surface $S$ such that $S \setminus \Gamma$ is a collection of disks, it does \emph{not} necessarily imply that $\Gamma$ has a Tait coloring inducing the surface $S$.  Indeed, consider the embedding of $K_{3,3}$ in $\RP^2$ shown in Figure~\ref{K3RP}, where $\RP^2$ is obtained from the disk by identifying antipodal points on its boundary.  Since $\RP^2$ is nonorientable and $K_{3,3}$ is bipartite, Lemma~\ref{orient} implies that there does not exist a Tait coloring $\mathcal{C}$ of $K_{3,3}$ inducing $\RP^2$.  We leave the following as an exercise for the reader:  Let $\Gamma \subset S$ be an embedded cubic graph cutting $S$ into disks.  Then $\Gamma$ admits a Tait coloring inducing $S$ if and only if its graph dual $\Gamma'$ admits a 3-coloring (of its vertex set).  In the example shown in Figure~\ref{K3RP}, the dual $\Gamma'$ contains a $K_4$ subgraph.
\end{remark}

\begin{figure}[h!]
  \centering
  \includegraphics[width=.25\linewidth]{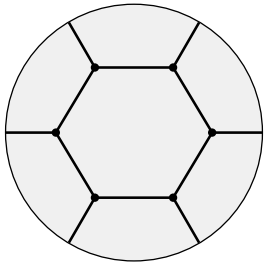}
  \caption{Embedding of $K_{3,3}$ in $\RP^2$, where antipodal points of the disk are identified.  No Tait coloring of $K_{3,3}$ induces $\RP^2$.}
\label{K3RP}
\end{figure}

\section{Operations on bridge trisections and cubic graphs}\label{ops}

In this section, we describe two well-known operations (connected summation and elementary perturbation) and two novel operations (crosscap summation and tubing) that increase the number of bridge points in a given bridge trisection.  We also discuss a way to simplify a cubic graph, called compression.  These operations will be the basis for the construction in the proof of Theorem~\ref{main}.

\subsection{Connected summation}

Given two bridge trisections $\T_1$ of $(X_1,\K_1)$ and $\T_2$ of $(X_2,\K_2)$, with bridge spheres $\Sigma_1$ and $\Sigma_2$ and distinguished bridge points $x_1 \in \bold x_1$ and $x_2 \in \bold x_2$, the connected sum $\T_1 \# \T_2$ is the trisection for $(X_1 \# X_2, \K_1 \# K_2)$ obtained by removing a 4-ball neighborhood of each point $x_i$, which necessarily meets each component piece of $\T_i$ in a ball of the appropriate dimension, then identifying the component pieces of $\T_1$ with $\T_2$ along the resulting boundaries.  On a diagrammatic level, a shadow diagram for $\T_1 \# \T_2$ is obtained by removing disk neighborhoods of the bridge points and gluing the two diagrams along the resulting boundaries.  If $\Gamma_1$ and $\Gamma_2$ are the 1-skeleta of $\T_1$ and $\T_2$, then the 1-skeleton of $\T_1 \# \T_2$ is obtained by vertex summing $\Gamma_1$ and $\Gamma_2$ along the vertices $v_1$ and $v_2$ corresponding to the bridge points $x_1$ and $x_2$.  An example is shown in Figure~\ref{lem3}.

\subsection{Elementary perturbation}

Another previously-known operation on bridge trisections is \emph{elementary perturbation}.  Every genus zero bridge trisection admits a shadow diagram $(a,b,c)$ in which any one of the pairings, say $(a,b)$, can be assumed to be \emph{standard}, meaning that the union $a \cup b$ is a collection of embedded, polygonal curves that bound pairwise disjoint disks in $\Sigma$~\cite{MZB1}.  Choose a disk $D$ bounded by one of the trivial curves in $a\cup b$, and let $\delta$ be an arc in $D$ with one endpoint in the interior of an arc $a_i \in a$ and the other endpoint in the interior of an arc $b_j \in b$.

Now, replace $a_i \cup b_j \cup \delta$ via an IH-move, which changes $a_i$ to two arcs $a_i'$ and $a_i''$, converts $b_j$ to two arcs $b_j'$ and $b_j''$, and replaces $\delta$ with transverse arc $d'$.  Let $a' = a \setminus \{a_i\} \cup \{a_i',a_i''\}$, $b' = b \setminus \{b_j\} \cup \{b_j',b_j''\}$, and $c' = c \cup \{d'\}$.  Then $(a',b',c')$ is a shadow diagram for another bridge trisection $\T'$ of the same surface.  This is called an \emph{elementary perturbation} of $\T$.  See Figure~\ref{pert}.  Of course, our choice of pairing was arbitrary, and this construction will work for any of the three pairings.

\begin{figure}[h!]
\begin{subfigure}{.5\textwidth}
  \centering
  \includegraphics[width=.5\linewidth]{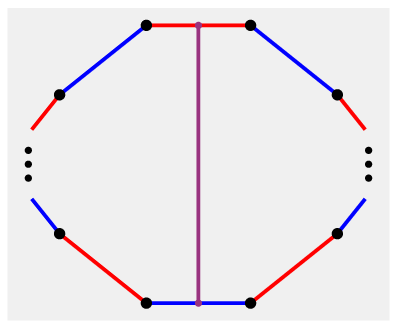}
  \caption{Before}
  \label{per1}
\end{subfigure}%
\begin{subfigure}{.5\textwidth}
  \centering
  \includegraphics[width=.5\linewidth]{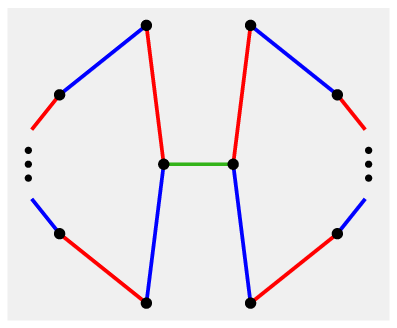}
  \caption{After}
  \label{per2}
\end{subfigure}
\caption{Local pictures of a shadow diagram corresponding to an elementary perturbation.}
\label{pert}
\end{figure}

The reverse operation is called \emph{elementary deperturbation}:  Suppose that a trisection $\T$ has a shadow diagram $(a,b,c)$ such that the pairing $(a,b)$ is standard, and there is an arc $d$ in $c$ and disjoint disks $D_1$ and $D_2$ bounded by arcs in $a \cup b$ such that $d$ meets $D_i$ in a single bridge point $x_i$.  Then $x_i$ is the endpoint of arcs $a_i$ and $b_i$.  Let $a_1'$ be the arc $a_1 \cup d \cup a_2$ and let $b_1'$ be the arc $b_1 \cup d \cup b_2$.  Then, with $a' = a \setminus \{a_1,a_2\} \cup \{a_1'\}$, $b' = b \setminus \{b_1,b_2\} \cup \{b_1'\}$, and $c' = c \setminus \{d\}$, we have that $(a',b',c')$ is a shadow diagram for another bridge trisection $\T'$ of the same surface and 4-manifold.  We say that $\T'$ is related to $\T$ by \emph{elementary deperturbation}.  By inspection, the net result of an elementary perturbation followed by an elementary deperturbation along the appropriate arc returns the original bridge trisection.  For further details on these operations, see~\cite{MZB1} and~\cite{MZB2}.


\subsection{Crosscap summation}

Here we introduce a new type of local modification of a bridge trisection, which we call crosscap summation.  As in the definition of elementary perturbation, suppose that a bridge trisection $\T$ admits a shadow diagram $(a,b,c)$ in which $(a,b)$ is standard.  Choose a disk $D$ bounded by one of the trivial curves in $a\cup b$, and let $\delta$ be an arc in $D$ with one endpoint in the interior of an arc $a_i \in a$ and the other endpoint in the interior of an arc $b_j \in b$.

Next, replace $a_i \cup b_j \cup \delta$ via the following procedure:  Introduce two new bridge points $x_+$ and $x_-$ along $\delta$, with a subarc $d'$ of $\delta$ connecting them.  Suppose the endpoints of $a_i$ are $x_a'$ and $x_a''$ and the endpoints of $b_j$ are $x'_b$ and $x''_b$, where $x'_a$ and $x'_b$ can be connected by an arc that does not separate $x_a''$ from $x_b''$ in $D$.  Let $a_i'$ be an arc connecting $x'_a$ to $x_+$, let $a_i''$ be an arc connecting $x_a''$ to $x_-$, let $b_j'$ be an arc connecting $x_b'$ to $x_-$, and let $b_j''$ be an arc connecting $x_b''$ to $x_+$, where either $a_i' \cap b_j' = \emp$ and $a_i''$ meets $b_j''$ in a single point, or vice versa.  Let $a' = a \setminus \{a_i\} \cup \{a_i',a_i''\}$, $b' = b \setminus \{b_j\} \cup \{b_j',b_j''\}$, and $c' = c \cup \{d'\}$.  We say that the resulting triple $(a',b',c')$ is obtained from $(a,b,c)$ by \emph{crosscap summation}.  The choices made in the construction give rise to two distinct versions crosscap summations; we recognize this distinction by calling one a \emph{positive crosscap summation} and the other a \emph{negative crosscap summation}.  A depiction of each is shown in Figure~\ref{ccap}.

\begin{figure}[h!]
\begin{subfigure}{.33\textwidth}
  \centering
  \includegraphics[width=.75\linewidth]{perturb1.eps}
  \caption{Before}
  \label{cc1}
\end{subfigure}%
\begin{subfigure}{.33\textwidth}
  \centering
  \includegraphics[width=.75\linewidth]{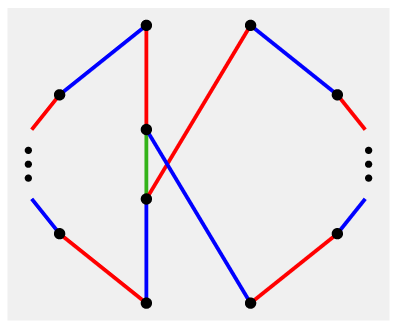}
  \caption{Positive}
  \label{cc2}
\end{subfigure}
\begin{subfigure}{.33\textwidth}
  \centering
  \includegraphics[width=.75\linewidth]{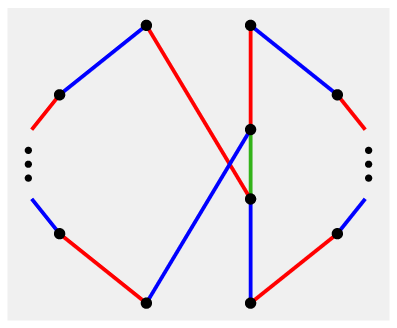}
  \caption{Negative}
  \label{cc3}
\end{subfigure}
\caption{Local pictures of a shadow diagram corresponding to positive and negative crosscap summations.}
\label{ccap}
\end{figure}

We have named this operation crosscap summation because the resulting surface is obtained by taking the connected sum with an unknotted projective plane, as demonstrated in the next lemma.

\begin{lemma}\label{crosscapping}
Suppose that $\Tt$ is a bridge trisection of $\K$ with shadow diagram $(a,b,c)$, where the pairing $(a,b)$ is standard.  Then the result $(a',b',c')$ of positive (resp. negative) crosscap summation is a shadow diagram for a bridge trisection $\T'$ of $\K \# \F^+$ (resp. $\K \# \F^-$).
\end{lemma}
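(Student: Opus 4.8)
The plan is to verify directly that the local move described in the construction of crosscap summation realizes, on the level of bridge trisections, the connected sum with $\F^{\pm}$. The key observation is that $\F^{\pm}$ admits a bridge trisection with exactly two bridge points whose shadow diagram is exactly the local picture $\mathcal{F}^{\pm}$ drawn in Figure~\ref{rpfigs}; indeed, comparing the local picture in Figure~\ref{ccap} to that figure, the ``new'' portion of $(a',b',c')$ inside the disk $D$ — two $a$-arcs, two $b$-arcs, and one $c$-arc $d'$ on the bridge points $x_+, x_-$ together with portions of $x_a', x_a'', x_b', x_b''$ — is precisely (a copy of) the shadow diagram for $\F^+$ or $\F^-$, depending on which of the two crossing conventions was chosen. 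So the first step is to make this identification precise: isolate the local subdiagram produced by the crosscap summation and exhibit an explicit matching between it and the genus-zero shadow diagram for $\F^{\pm}$, tracking the sign.

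Next I would recast the move as a connected summation in the sense of the ``Connected summation'' subsection. Pick the distinguished bridge point of $\Tt$ to be a point lying on the subarc of $\delta$; pick the distinguished bridge point of (the bridge trisection of) $\F^{\pm}$ correspondingly. Removing a small disk neighborhood of each distinguished point and gluing the two shadow diagrams along the resulting boundary circles is exactly the prescription in that subsection, and I claim the result is combinatorially identical to $(a',b',c')$. This requires checking that the three arc systems match up: away from $D$ the diagram is unchanged, inside $D$ one sees the spliced-in copy of $\F^{\pm}$, and the gluing along the boundary of the removed disk reattaches the loose ends of $a_i, b_j$ to the loose ends of the $\F^{\pm}$-arcs in the pattern prescribed by $a_i', a_i'', b_j', b_j''$. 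Since connected summation of bridge trisections yields a bridge trisection of the connected sum of the pairs (by the construction recalled in the excerpt), and $(S^4 \# S^4, \K \# \F^{\pm}) = (S^4, \K \# \F^{\pm})$, this gives that $(a',b',c')$ is a shadow diagram for a bridge trisection of $\K \# \F^{\pm}$, with the $+/-$ bookkeeping inherited from the two crossing choices.

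I would also record, as part of the verification, that $(a',b',c')$ is genuinely a valid shadow diagram — i.e., that $a'$, $b'$, $c'$ are each embedded collections of pairwise disjoint arcs with the correct endpoint incidences on the new bridge point set, and that each pairwise union is the shadow of a trivial tangle (equivalently, that each pairwise projection gives an unlink diagram). This is immediate away from $D$; inside $D$ it follows from the corresponding facts for the standard diagram of $\F^{\pm}$, which are part of what it means for Figure~\ref{rpfigs} to be a shadow diagram of $\F^{\pm}$. The one genuinely delicate point — and the step I expect to be the main obstacle — is the sign: confirming that the two combinatorially distinct outcomes of the construction (``$a_i' \cap b_j' = \emptyset$ with $a_i'' \cap b_j''$ a point'' versus the reverse) correspond, respectively, to gluing in $\F^+$ versus $\F^-$ rather than both giving the same normal Euler number contribution. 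I would settle this by computing the normal Euler number of the resulting surface — for instance, by passing to the associated banded unlink/broken-surface diagram and counting signed double points of a pushoff, or by comparing directly with the known computation $e(\F^{\pm}) = \pm 2$ from the ``Unknotted surfaces'' subsection together with additivity of $e$ under connected sum — and checking that the two choices of crossing convention in Figure~\ref{ccap} produce opposite signs.
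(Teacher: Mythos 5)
Your overall strategy---realizing the crosscap summation as a connected summation with the standard bridge trisection of $\F^{\pm}$---is the same one the paper uses, but as written there is a genuine gap at the step where you perform the connected sum. The connected summation recalled in Section~\ref{ops} is only defined at a \emph{bridge point}: one removes a 4-ball neighborhood of a bridge point of each trisection, so that each boundary circle in the bridge sphere meets the shadow diagram in exactly three arc-ends, one of each color, and these are matched up. The arc $\delta$ contains no bridge point of $\Tt$ (its endpoints lie in the \emph{interiors} of $a_i$ and $b_j$, and its interior is disjoint from the diagram), so ``pick the distinguished bridge point of $\Tt$ to be a point lying on the subarc of $\delta$'' is not available. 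If instead you remove a disk meeting the diagram of $\Tt$ in subarcs of $a_i$ and $b_j$, its boundary meets the diagram in two red and two blue points, which cannot be glued to the three monochromatic arc-ends exposed by deleting a bridge point from the diagram of $\F^{\pm}$; and the local region of Figure~\ref{ccap} you propose to identify with $\F^{\pm}$ (two bridge points, one $c$-arc, and four partial $a$- and $b$-arcs) is not literally a copy of the $2$-bridge diagram of Figure~\ref{rpfigs}, which has four bridge points and two arcs of each color. So the appeal to ``connected summation of bridge trisections yields a bridge trisection of the connected sum'' does not apply to the operation you are actually performing, and verifying that your splicing produces a bridge trisection of $\K\#\F^{\pm}$ is essentially the content of the lemma itself.

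The missing idea is a perturbation/deperturbation sandwich. First perform an elementary perturbation along $\delta$; this creates genuine bridge points $v_1,v_2$ on $\delta$ and a new $c$-arc $d''$ without changing the surface. Now form the connected sum with $\T^{\pm}$ at $v_1$, which is a legitimate instance of the defined operation, and finally elementarily deperturb along the $c$-arc containing the image of $d''$. Each of these three steps is already known to produce a bridge trisection of the expected surface, and one checks by inspecting the diagrams that their composite is exactly $(a',b',c')$. Your closing concern about distinguishing the two conventions is then handled for free by which of $\F^+$ or $\F^-$ is summed on (and which endpoint of $d''$ is used); no normal Euler number computation via banded unlink diagrams is needed.
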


\begin{proof}
Suppose the triple $(a',b',c')$ is obtained from $(a,b,c)$ by a positive crosscap summation along an arc $\delta$.  Let the bridge trisection $\T''$ with shadow diagram $(a'',b'',c'')$ be the result of an elementary perturbation diagram along $\delta$, where $d''$ is the newly created arc in $c''$, with endpoints $v_1$ and $v_2$.  Let $\T^+$ denote the bridge trisection of $\F^+$ shown in Figure~\ref{rpfigs}, and consider the bridge trisection $\T'' \# \T^{\pm}$ of $\K \# \F^{\pm}$, where the connected sum is taken along a neighborhood of the bridge point $v_1$.  Taking the connected sum of shadow diagrams for $\T''$ and $\T^{\pm}$, we can see that the $\T'' \# \T^{\pm}$ admits an elementary deperturbation along the arc corresponding to $d''$ in the connected sum.  The result of elementary deperturbation is a bridge trisection $\T'$ whose shadow diagram coincides with $(a',b',c')$.  Since the three operations used in this proof result in bridge trisections, the statement of the lemma follows.  See Figure~\ref{threestep}.
\end{proof}

\begin{figure}[h!]
\begin{subfigure}{.24\textwidth}
  \centering
  \includegraphics[width=.9\linewidth]{perturb1.eps}
  \caption{$\T$}
  \label{lem1}
\end{subfigure}%
\begin{subfigure}{.24\textwidth}
  \centering
  \includegraphics[width=.9\linewidth]{perturb2.eps}
  \caption{$\T''$}
  \label{lem2}
\end{subfigure}
\begin{subfigure}{.24\textwidth}
  \centering
  \includegraphics[width=.9\linewidth]{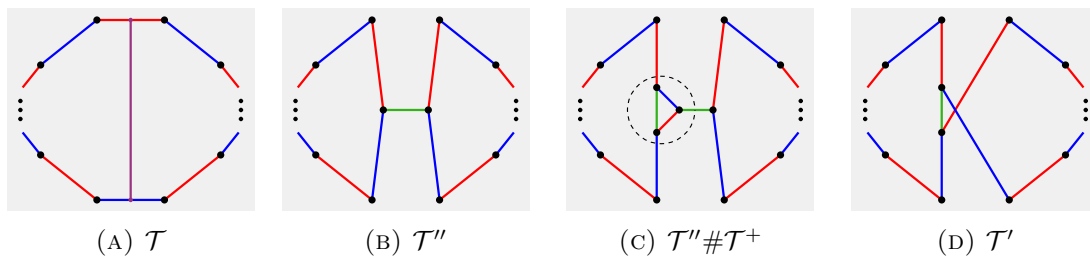}
  \caption{$\T'' \# \T^+$}
  \label{lem3}
\end{subfigure}
\begin{subfigure}{.24\textwidth}
  \centering
  \includegraphics[width=.9\linewidth]{cperturb1.eps}
  \caption{$\T'$}
  \label{lem4}
\end{subfigure}
\caption{The sequence of bridge trisection moves described in the proof of Lemma~\ref{crosscapping}.  Choosing the opposite vertex of $c''$ and the shadow diagram for $\F^-$ in Figure~\ref{rpm} produces the negative crosscap summation shown in Figure~\ref{cc3}.}
\label{threestep}
\end{figure}

\subsection{Tubing}

For this operation, suppose $\T$ is a bridge trisection with shadow diagram $(a,b,c)$ such that one of the pairings, say $(a,b)$ is standard, and let $\delta$ be an arc in $\Sigma$ connecting an arc $a_i$ in one component of $a \cup b$ to an arc $b_j$ in another component, so that $\delta$ meets $a\cup b$ only at its endpoints.  In a move locally identical to a perturbation along $\delta$, we replace $a_i \cup b_j \cup \delta$ via an IH-move, which changes $a_i$ to two arcs $a_i'$ and $a_i''$, converts $b_j$ to two arcs $b_j'$, and $b_j''$, and replaces $\delta$ with transverse arc $d'$.

Let $a' = a \setminus \{a_i\} \cup \{a_i',a_i''\}$, $b' = b \setminus \{b_j\} \cup \{b_j',b_j''\}$, and $c' = c \cup \{d'\}$.  Using the fact that $(a,b,c)$ is a shadow diagram for a bridge trisection $\T$, we can verify by inspection that each of unions $a' \cup b'$, $b' \cup c'$, and $c' \cup a'$ determines a shadow diagram for a bridge splitting of an unlink, and thus $(a',b',c')$ determines a trisection $\T'$ of some knotted surface $\K'$, although the relationship between $\K$ and $\K'$ is not immediately clear.  We will show that $\K'$ is obtained from $\K$ by an operation called 1-handle addition, and to distinguish the operation on knotted surfaces from the operation on bridge trisections, we say that the bridge trisection $\T'$ is related to $\T'$ by \emph{tubing along $\delta$}.  See Figure~\ref{tub} for a local picture of tubing.  Note that the difference between tubing and elementary perturbation is global: in the case of tubing, the arc $\delta$ connects distinct components of $a \cup b$, whereas in an elementary perturbation, $\delta$ connects two arcs in the same component of $a \cup b$.

\begin{figure}[h!]
\begin{subfigure}{.5\textwidth}
  \centering
  \includegraphics[width=.5\linewidth]{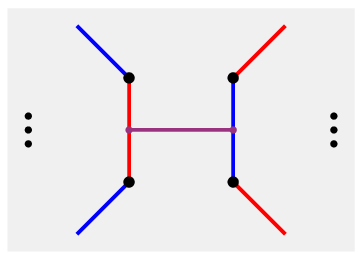}
  \caption{Before}
  \label{tub1}
\end{subfigure}%
\begin{subfigure}{.5\textwidth}
  \centering
  \includegraphics[width=.65\linewidth]{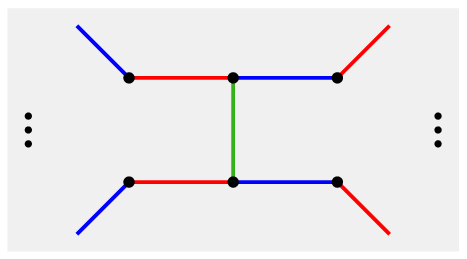}
  \caption{After}
  \label{tub2}
\end{subfigure}
\caption{A local picture of the tubing operation}
\label{tub}
\end{figure}

Suppose now that $\K$ is an embedded surface in $S^4$, and let $D^2 \X I$ be an embedded 1-handle for $\K$, so $(D^2 \X I) \cap \K = D^2 \X \{0,1\}$.  We can obtain a new surface $\K'$ by attaching the 1-handle to $\K$; that is, $\K' = \K \setminus (D^2 \X \{0,1\}) \cup (S^1 \X I)$.  Letting $\delta = \{0\} \X I$, the core of the 1-handle, we say that $\K'$ is obtained from $\K$ by \emph{1-handle addition along $\delta$}~\cite{CKS} (or by \emph{stabilization}~\cite{BS}).  Note that a 1-handle addition can be either orientable or nonorientable.  This handle addition is a well-studied operation in knotted surface theory; it is known, for example, to be an unknotting operation~\cite{HoKa}.  In addition, the following lemma was established for orientable surfaces in~\cite{HoKa} and nonorientable surfaces in~\cite{Kamada} and~\cite{BS}.  We will require this lemma in the next section.

\begin{lemma}\label{unknot}
Suppose $\K \subset S^4$ is unknotted, and $\K'$ is obtained from $\K$ by 1-handle addition.  Then $\K'$ is also unknotted.
\end{lemma}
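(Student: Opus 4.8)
The plan is to reduce an arbitrary $1$-handle on $\K$ to a standard local model, and then read off the conclusion from the definitions of unknottedness; this is the strategy of \cite{HoKa} in the orientable case and of \cite{Kamada,BS} in general, and I would follow it (importing their technical work rather than reproving it). Recall that a $1$-handle $D^2\X I$ on $\K$ is determined, up to ambient isotopy, by its core arc $\delta$ (with $\partial\delta\subset\K$) together with a framing of the normal $D^2$-bundle of $\delta$ relative to the framing that $\K$ induces at $\partial\delta$.

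First I would standardize the core arc. Since $\delta$ has codimension $3$ in $S^4$, after a general-position isotopy $\Int(\delta)$ is disjoint from $\K$, so $\delta$ lies in the complement $S^4\setminus\K$; and because $\K$ is unknotted there is enough room in that complement to isotope $\delta$, allowing $\partial\delta$ to slide within $\K$, to a short unknotted arc contained in a ball $B$ that meets $\K$ in a single trivial disk. Next I would standardize the framing: once $\delta$ is standard the only isotopy invariant of the framing that survives in $S^4$ is the $\Z/2$ recording whether the handle is orientable or nonorientable, since any integer twisting of the band can be undone using the extra dimension available in a $4$-ball. This puts $H$ into one of exactly two standard local models inside $B$: the orientable model, in which $\K'\cap B$ is a standardly embedded once-punctured torus, and the nonorientable model, in which $\K'\cap B$ is a standardly embedded once-punctured projective plane of either sign.

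It then remains to conclude in the two cases. In the orientable model, $\K'$ is obtained from $\K$ by replacing a trivial disk with a trivial once-punctured torus, i.e.\ $\K'=\K\#T$ for an unknotted torus $T$; if $\K$ is orientable it bounds a $3$-dimensional handlebody $V$, and $V\,\natural\,(S^1\X D^2)$ is a handlebody with boundary $\K'$, so $\K'$ is unknotted, while if $\K$ is already nonorientable then $\K\#T$ is again, after the standard identities among unknotted embeddings of $\RP^2$ and $T^2$, a connected sum of copies of $\F^+$ and $\F^-$. In the nonorientable model, $\K'=\K\#\F^+$ or $\K'=\K\#\F^-$ exactly as in the proof of Lemma~\ref{crosscapping}; since a connected sum of unknotted surfaces is unknotted --- for nonorientable surfaces this uses precisely those $\RP^2$-identities together with the Whitney--Massey constraint on normal Euler numbers \cite{Massey}, and is the content of \cite{Kamada,BS} --- we again conclude that $\K'$ is unknotted.

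The main obstacle is the first step: showing that \emph{every} $1$-handle attached to the unknotted surface $\K$ is isotopic to one of the two standard models. This is exactly where the hypothesis that $\K$ is unknotted is essential --- for a knotted surface a $1$-handle can itself be knotted --- and simultaneously controlling the attaching arc inside $S^4\setminus\K$ and the framing relative to $\K$ is the technical heart of \cite{HoKa,Kamada,BS}. I would cite their arguments for this step.
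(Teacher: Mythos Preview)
The paper does not actually prove this lemma at all; it is stated without proof and attributed to \cite{HoKa} (orientable case) and \cite{Kamada,BS} (nonorientable case). Your proposal sketches the strategy of those cited works and then defers the technical heart to the same references, so in substance you are doing exactly what the paper does, only with more exposition.
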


We now give the connection between tubing and 1-handle addition.

\begin{lemma}\label{tuber}
Suppose $(a,b,c)$ is a shadow diagram for a bridge trisection $\T$ of a knotted surface $\K \subset S^4$, and let $(a',b',c')$ be the shadow diagram for the bridge trisection $\T'$ of the knotted surface $\K'$ obtained by tubing $\T$ along an arc $\delta$.  The $\K'$ is related to $\K$ by a 1-handle addition along $\delta$.
\end{lemma}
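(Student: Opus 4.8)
The plan is to track exactly what the tubing IH-move does to the underlying knotted surface, using the fact that a bridge trisection is assembled from its three trivial tangles $\tau_{ij}$ and trivial disk systems $\D_i$. Since tubing is "locally identical to a perturbation" (in the language used to define it), the only difference between $\T$ and $\T'$ at the level of the three 3-dimensional pieces is that we have split one arc of $a$ and one arc of $b$ and introduced a new arc $d'$ of $c$; globally, however, $\delta$ joins two \emph{distinct} components of $a\cup b$, and this is what will produce a 1-handle rather than a cancelling perturbation. First I would set up coordinates near $\delta$: choose a small 4-ball $B^4$ that is a neighborhood of $\delta$ in $S^4$, arranged so that $B^4$ meets $\Sigma$ in a disk containing $\delta$, meets each $B_{ij}$ in a 3-ball, and meets each $X_i$ in a 4-ball, and so that $B^4 \cap \K$ is exactly the pair of trivial disk-pieces coming from $a_i$ and $b_j$. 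Outside $B^4$, the two bridge trisections $\T$ and $\T'$ literally agree.

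Next I would analyze the local picture inside $B^4$. Before tubing, $B^4 \cap \K$ consists of two disjoint sheets (the portions of $\K$ lying over the arcs $a_i$ and $b_j$, which live in different components of $a\cup b$, hence are a priori on different components or at least on different "patches" of $\K$ near $\delta$). After the IH-move, the shadow arcs $a_i', a_i'', b_j', b_j''$ together with $d'$ describe, inside $B^4$, a neighborhood of a properly embedded annulus $S^1\times I$ connecting the two boundary circles $D^2\times\{0\}$ and $D^2\times\{1\}$ that the two original sheets cut out on $\partial B^4$. Concretely, I would verify that the trivial tangles $\tau_{ij}'$ with the new arcs, pushed off $\Sigma$, bound the appropriate trivial disks $\D_i'$ in $X_i$, and that $\bigcup \D_i' = \K'$ is obtained from $\bigcup \D_i = \K$ by removing $D^2\times\{0,1\}$ and gluing in $S^1\times I$ exactly as in the definition of 1-handle addition along $\delta=\{0\}\times I$. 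This is a direct, if slightly fiddly, verification that the "after" picture in Figure~\ref{tub} is the standard picture of a 1-handle attached along $\delta$; the orientable-versus-nonorientable distinction corresponds precisely to the two ways the IH-move can reattach the split arcs (the same dichotomy that distinguishes positive from negative crosscap summation), but in both cases the resulting surface piece inside $B^4$ is an annulus, so we genuinely get a (possibly nonorientable) 1-handle addition.

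Finally, I would assemble the global statement: since $\K$ and $\K'$ agree outside $B^4$, and inside $B^4$ the surface $\K'$ is obtained from $\K$ by the local 1-handle move along $\delta$, we conclude $\K' = \K \setminus (D^2\times\{0,1\}) \cup (S^1\times I)$ is a 1-handle addition of $\K$ along $\delta$. The main obstacle I anticipate is not conceptual but bookkeeping: one must check carefully that the new arc collections $(a',b',c')$ really do define \emph{trivial} tangles and \emph{trivial} disk systems in each piece — i.e., that $\T'$ is a bona fide bridge trisection, which is already asserted in the definition of tubing by inspection of the three pairwise unlink diagrams — and then that the framing of the 1-handle is the one dictated by the disk $D\subset\Sigma$ in which $\delta$ lies, so that the embedded 1-handle $D^2\times I$ really sits in $S^4$ as claimed. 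Once the local model is pinned down, the global conclusion is immediate.
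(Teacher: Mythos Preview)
Your approach is sound in principle but takes a different route from the paper's. You propose a direct local analysis: isolate a 4-ball neighborhood $B^4$ of $\delta$, identify $\K\cap B^4$ as two disjoint sheets, and verify that $\K'\cap B^4$ is an annulus joining their boundaries. The paper instead introduces an auxiliary bridge trisection $\T^*$ of an unknotted 2-sphere $\K^*$ (built from the 1-bridge trisection by two elementary perturbations) sitting in a neighborhood of $a_i\cup\delta\cup b_j$, and then performs a \emph{double} connected sum of $\T$ with $\T^*$ at two bridge points. Since $\K^*$ bounds a 3-ball $\cong D^2\times I$, gluing it to $\K$ along two disks is precisely 1-handle addition along $\delta$; one then checks diagrammatically that the resulting shadow diagram is isotopic to $(a',b',c')$. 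This reduces the 4-dimensional verification you defer to ``direct, if slightly fiddly'' inspection entirely to known 2-dimensional moves (perturbation, connected sum) plus the single topological fact that the unknotted sphere bounds a ball. Your approach would work, but the step you flag as bookkeeping---showing that the new trivial disk systems $\D_i'$ assemble to $\K\setminus(D^2\times\{0,1\})\cup(S^1\times I)$---is exactly the content of the lemma, and the paper's auxiliary-sphere trick is what makes it tractable.

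One small correction: for tubing there is only \emph{one} IH-move, not two. The dichotomy you mention (analogous to positive versus negative crosscap summation) does not appear here; whether the resulting 1-handle is orientable or nonorientable is determined by the global configuration of $\K$ along $\delta$, not by a local choice in the move.
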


\begin{proof}
As in the definition of tubing, suppose that the pairing $(a,b)$ is standard and the arc $\delta \subset \Sigma$ connects arcs $a_i \in a$ and $b_j \in b$.  By an isotopy of $\delta$ we can move it close to two bridge points $x_1$ and $x_2$ as shown in Figure~\ref{tubing1}.  Consider the shadow diagram for a bridge trisection $\T^*$ of a surface $\K^*$ shown in Figure~\ref{tubing2}, which is contained in a neighborhood of $a_i\cup\delta\cup b_j$ in the surface $\Sigma$.  We can see that $\T^*$ is the result of two elementary perturbations applied to the 1-bridge trisection of an unknotted 2-sphere, so that $\K^*$ is an unknotted 2-sphere bounding a 3-ball contained in a neighborhood of $\delta$ in $S^4$, which can be assumed to be disjoint from $\K$.

Now, we perform an operation similar to the connected sum at the bridge points $x_1$ and $x_2$ with the two closest bridge points of $x_1^*$ and $x_2^*$ of $\T^*$, as shown in Figure~\ref{tubing3}.  At the level of the embedded surfaces, this operation corresponds to removing disk neighborhoods of $x_1$ and $x_2$ in $\K$ and disk neighborhoods of $x_1^*$ and $x_2^*$ of $\K^*$ and identifying their respective boundaries to get a new surface $\K'$.  Since $\K^*$ bounds a 3-ball, which is diffeomorphic to $I \X D^2$, we see that $\K'$ is obtained from $\K$ by a 1-handle attachment along $\delta$.  Finally, by inspection, pairs of arc systems in the resulting diagram in Figure~\ref{tubing3} are shadow diagrams for unlinks, yielding a shadow diagram for a bridge trisection $\T'$ of $\K'$, and the diagram in Figure~\ref{tubing3} is isotopic to Figure~\ref{tub2}.
\end{proof}  

\begin{figure}[h!]
\begin{subfigure}{.33\textwidth}
  \centering
  \includegraphics[width=.9\linewidth]{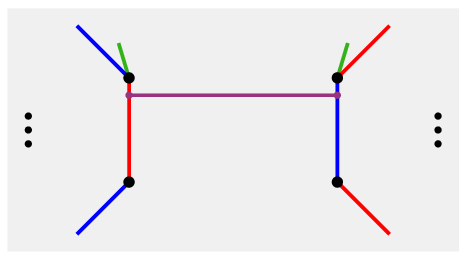}
  \caption{}
  \label{tubing1}
\end{subfigure}%
\begin{subfigure}{.33\textwidth}
  \centering
  \includegraphics[width=.9\linewidth]{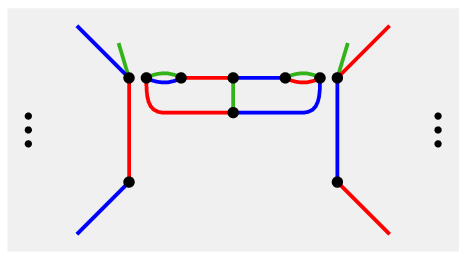}
  \caption{}
  \label{tubing2}
\end{subfigure}
\begin{subfigure}{.33\textwidth}
  \centering
  \includegraphics[width=.9\linewidth]{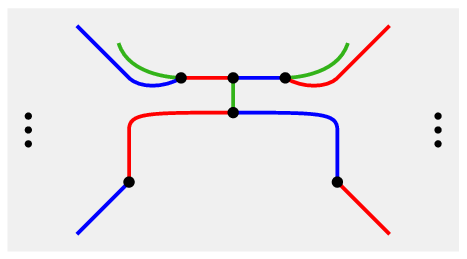}
  \caption{}
  \label{tubing3}
\end{subfigure}
\caption{Shadow diagrams realizing the tubing operation as gluing an unknotted 2-sphere to $\K$ along two disks to get $\K'$, where $\K'$ is obtained from $\K$ by 1-handle addition.}
\label{tubings}
\end{figure}

\subsection{Compression of cubic graphs}

Surprisingly, at the level of the 1-skeleta, the inverses of each of the three moves described above correspond to a single abstract simplification move on a Tait-colored cubic graph $\Gamma$.  Choose an distinguished edge in $\Gamma$, and suppose without loss of generality that the edge, call it $e_g$, is colored green.  We also suppose further that $e_g$ is not parallel to another edge of $\Gamma$.  Let $v^+$ and $v^-$ be the endpoints of $e_g$.  Then there are red edges $e_r^{\pm}$ with one endpoint on $v^{\pm}$ and another endpoint on $v_r^{\pm}$, and there are blue edges $e_b^{\pm}$ with one endpoint on $v^{\pm}$ and another endpoint on $v_b^{\pm}$.  Since $e_g$ is not parallel to any other edge, it follows that the four edges $\{e_r^{\pm},e_b^{\pm})$ are distinct, $v_r^+ \neq v_r^-$, $v_b^+ \neq v_b^-$, and no vertex in the set $\{v_r^{\pm},v_b^{\pm}\}$ is $v^+$ or $v^-$.

We obtain a new Tait-colored cubic graph $\Gamma'$ by removing the vertices $v^{\pm}$ and edge $e_g$, replacing the pair $e_r^+$ and $e_r^-$ with a single red edge $e_r$ between $v_r^+$ to $v_r^-$, and replacing the pair $e_b^+$ and $e_b^-$ with a single blue edge between $v_b^+$ and $v_b^-$.  We say that the new graph $\Gamma'$ is obtained from $\Gamma$ by \emph{compression} along the edge $e_g$.  By inspection, we can see that the graph in Figure~\ref{per1} is obtained from the graph in Figure~\ref{per2} by compression along the displayed green edge.  Similarly, the graphs in Figures~\ref{cc2} and~\ref{cc3} are graph isomorphic, and the graph in Figure~\ref{cc1} is obtained from either of these graphs by compression along the displayed green edge.  Finally, the graph in Figure~\ref{tub1} is obtained from the graph in Figure~\ref{tub2} by compression along the displayed green edge.

These three examples of compression are local identical but globally different, and so we further distinguish them.  For that purpose, we define orientable and nonorientable edges.  Suppose that $\Gamma$ is Tait-colored cubic graph, and $e$ be an edge of $\Gamma$ such that both endpoints of $e$ are contained in the same bi-colored cycle $C$ of the two colors opposite the color of $e$.  Coherently orient $C$, so that the vertices have alternating $+$ and $-$ labels as in the proof of Lemma~\ref{orient}.  If $e$ connects vertices of opposite sign, we say $e$ is \emph{orientation-preserving}.  Otherwise, $e$ connects vertices of the same sign, and we say $e$ is \emph{orientation-reversing}.  Equivalently, $e$ is orientation-preserving if and only if it completes a path in $C$ to a cycle of even length. Following the proof of Lemma~\ref{orient}, we also note that a one-patch graph $\Gamma$ is orientable if and only if it does not contain an orientation-reversing edge with respect to some  bicolored cycle.

By definition, an edge $e$ with vertices in the same bi-colored cycle $C$ of the two opposite colors is either orientation-preserving or orientation-reversing.  If, on the the other hand, $e$ connects distinct bi-colored cycles of the two opposite colors, we say that $e$ is \emph{connecting}.  A compression performed along a connecting edge is called an \emph{p-compression} (Figures~\ref{per1} and~\ref{per2}),  a compression along an orientation-reversing edge is called a \emph{c-compression} (Figures~\ref{cc1}, ~\ref{cc2}, and~\ref{cc3}), and a compression along an orientation-preserving edge is called a \emph{t-compression} (Figures~\ref{tub1} and~\ref{tub2}).  Note that p-compression, c-compression, and t-compression are operations that are inverses to the operations on the 1-skeleton of a bridge trisection induced by elementary perturbation, crosscap summation, and tubing, respectively.

\begin{remark}
\label{rem:ortn}
	We observe that both p-compression and t-compression of an oriented graph result in orientable graphs.  On the other hand, c-compression can only be applied to nonorientable graphs and may result in either an orientable graph or a nonorientable graph.
\end{remark}

\section{Proof of the main theorem}\label{mainproof}

The theta graph is the simplest Tait-colorable graph, and it is the 1-skeleton of the simplest bridge trisection, the 1-bridge trisection of the unknotted $S^2$.  Before proving the main theorem, we require several technical results related to sequences of compressions reducing a given graph.

\begin{proposition}\label{ugh}
Suppose $\Gamma$ is a one-patch nonorientable Tait-colored cubic graph.  Then $\Gamma$ admits a c-compression yielding a nonorientable graph or the theta graph.
\end{proposition}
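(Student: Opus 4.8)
The plan is to reduce the problem by a local computation to a single obstructing configuration, and then to show that configuration forces $\Gamma$ to be $K_4$ with its Tait coloring, which c-compresses to the theta graph.

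Since $\Gamma$ is nonorientable it is not bipartite, so by the orientability criterion for one-patch graphs from Section~\ref{ops} it has an orientation-reversing edge; fix a color, say green, let $C$ be the unique red-blue cycle, and coherently orient $C$ as in the proof of Lemma~\ref{orient}, splitting the vertices into $V^+$ and $V^-$. Note $\Gamma$ is not the theta graph (which is bipartite), hence $\Gamma$ has no parallel edges: a parallel pair of differently colored edges would be a $2$-cycle component of one of the three bi-colored subgraphs, contradicting the one-patch hypothesis. Call two edges \emph{crossing} if, viewed as chords of a bi-colored cycle containing both their endpoint-pairs, each separates the endpoints of the other.

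First I would record how a c-compression acts on $C$: if $e_g=v^+v^-$ is orientation-reversing, deleting $v^+,v^-$ cuts $C$ into two arcs $A_1,A_2$, and reconnecting along the new red and blue edges gives a one-patch graph $\Gamma'$ (again one-patch, as one sees from the local picture) whose red-blue cycle, coherently oriented, carries the $2$-coloring of $\Gamma$ on $A_1$ and its reversal on $A_2$. As a one-patch graph, $\Gamma'$ is bipartite exactly when no green edge of $\Gamma'$ is monochromatic in that $2$-coloring, which translates to: the orientation-reversing green edges of $\Gamma$ other than $e_g$ are precisely the green edges crossing $e_g$. This criterion disposes of two cases at once: if some two orientation-reversing green edges do not cross, c-compress along one of them; if some orientation-preserving green edge crosses an orientation-reversing green edge $e_g$, c-compress along $e_g$. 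In either case $\Gamma'$ is nonorientable, so we may assume neither happens.

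This is the crux. Now the orientation-reversing green edges pairwise cross on $C$, so they form the unique pairwise-crossing chord pattern $\{x_ix_{i+k}\}$ on its $2k$ endpoints, where $k=|O_g|\ge 2$; moreover $k$ is even, since each such chord joins two vertices of equal parity on $C$. Every orientation-preserving green edge crosses none of these chords, so it lies inside one of the $2k$ arcs $I_1,\dots,I_{2k}$ the chord endpoints cut from $C$. If some $I_j$ held an orientation-preserving green edge, then $I_j$ would be green-matched into itself, so $|I_j|$ is even, and $|I_j|\ge 4$ by simplicity; since $|I_j|$ is even, the first and last edges of the arc of $C$ consisting of $I_j$ together with its two bounding edges share a color, and for that color the interior $C$-edges match all vertices of $I_j$ except its two endpoints, while the interior $C$-edges of the other color match \emph{all} of $I_j$. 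That other color together with the green matching on $I_j$ then forms a $2$-regular graph on the vertex set $I_j$ --- a union of components of the corresponding bi-colored cycle contained entirely in $I_j$, impossible since that cycle is connected and spans $\Gamma$. Hence there are no orientation-preserving green edges, so $\Gamma$ is just the red-blue cycle on $2k$ vertices together with $M_g=\{x_ix_{i+k}\}$, and (using that $k$ is even) tracing the green-red subgraph produces a $4$-cycle component unless $k=2$. So $k=2$ and $\Gamma=K_4$ with its Tait coloring; a direct check shows every c-compression of $K_4$ is the theta graph, finishing the proof. The step I expect to be the main obstacle is exactly this last case: wringing out of the one-patch hypothesis for all three pairs of colors, together with the absence of parallel edges, enough rigidity to pin $\Gamma$ down to $K_4$, and the $2$-regularity observation is the device that converts ``an orientation-preserving green edge tucked inside an arc'' into a disconnected bi-colored cycle.
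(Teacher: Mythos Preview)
Your argument is correct and follows essentially the same route as the paper's proof: both establish the contrapositive that if every c-compression yields an orientable graph then $\Gamma=K_4$, via the same three steps---(i) the crossing/non-crossing criterion for how c-compression flips the orientation-reversing status of the remaining green chords, (ii) ruling out orientation-preserving green edges by locating an extra bi-colored cycle inside an arc between chord endpoints, and (iii) forcing the remaining diametral chord pattern to have exactly two chords. The only differences are cosmetic: you work with a single arc $I_j$ where the paper passes to the deleted subgraph $\Gamma^*$, and you trace the green-red $4$-cycle directly where the paper observes that $v_1v_2$ and $v_{2k+1}v_{2k+2}$ share a color.
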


\begin{proof}
Note that up to isomorphism, $K_4$ has a unique Tait coloring, every edge is nonorientable, and any c-compression yields the theta graph.  We will show that if every c-compression of $\Gamma$ yields an orientable graph, then $\Gamma = K_4$, from which the statement of the proposition follows.  Suppose every c-compression of $\Gamma$ yields an orientable graph.  We produce a convenient picture of $\Gamma$, in which the red and blue edges form a regular $n$-gon, and the green edges are drawn as chords of this $n$-gon (as in the examples in Figure~\ref{examples}).  In this setting, every pair of green edges meets either once or not at all.  Since $\Gamma$ is nonorientable, it contains a nonorientable edge $e$; we suppose without loss of generality that $e$ is colored green.  Orient the vertices of the red-blue cycle $C$ with $+$ and $-$.  Since each orientation-preserving green edge connects two vertices of opposite sign, while each green orientation-reversing edge connects vertices of the same sign, and there are the same number of vertices labeled $+$ as there are labeled $-$, it follows that the number of green orientation-reversing edges is even.  

Removing the red and blue edges incident to $e$ separates the red-blue cycle $C$ of $\Gamma$ into paths $p$ and $p'$.  We assume further that the vertices adjacent to $e$ are labeled $+$, so that both endpoints of the path $p$ and both endpoints of the path $p'$ are labeled $-$.  Consider the graph $\Gamma'$ obtained from c-compression of $\Gamma$ along $e$.  The red-blue cycle $C'$ of $\Gamma'$ is obtained by connecting the paths $p$ and $p'$ along their endpoints, and thus, an orientation of $C'$ can be obtained by preserving the orientation of $p$ coming from $C$ and reversing the orientation of $p'$ coming from $C$.  See Figure~\ref{cpcomp}.

\begin{figure}[h!]
\begin{subfigure}{.5\textwidth}
  \centering
  \includegraphics[width=.5\linewidth]{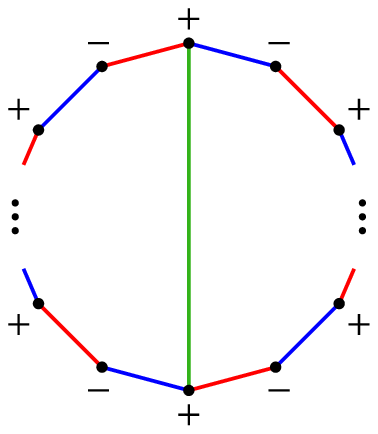}
  \label{cpc1}
\end{subfigure}%
\begin{subfigure}{.5\textwidth}
  \centering
  \includegraphics[width=.5\linewidth]{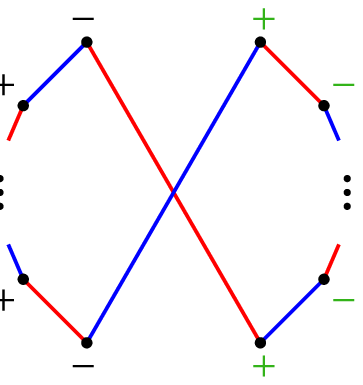}
  \label{cpc2}
\end{subfigure}
\caption{A nonorientable edge splits a red-blue cycle into paths $p$ and $p'$ (left), where the result of a c-compression preserves the orientation of $p$ and reverses the orientation of $p'$ (right).}
\label{cpcomp}
\end{figure}

It follows that if a green orientation-preserving edge of $\Gamma$ crosses $e$, then that edge becomes orientation-reversing in $\Gamma'$.  Similarly, if a green orientation-reversing edge of $\Gamma$ avoids $e$, then that edge remains orientation-reversing in $\Gamma'$.  By assumption, $\Gamma'$ does not contain an orientation-reversing edge, and thus we see that every green orientation-preserving edge avoids $e$, while every green orientation-reversing edge crosses $e$.  Moreover, this is true not just for $e$ but for every green orientation-reversing edge.  We conclude that every pair of green orientation-reversing edges in $\Gamma$ meet in a single point, and no green orientation-preserving edge crosses a green orientation-reversing edge.

Suppose now that $\Gamma$ contains a green orientation-preserving edge $e'$, and let $\Gamma^*$ be the graph obtained from $\Gamma$ by deleting the green orientation-reversing edges along with any adjacent vertices and incident red or blue edges.  Then $\Gamma^*$ is a proper subgraph of $\Gamma$, and each component of $\Gamma^*$ is spanned by a red-blue path $p^*$, with vertices $w_1,\dots,w_m$ and edges $e_1,\dots,e_{m-1}$ appearing in order.  In $\Gamma$, each $w_i$ is then the endpoint of a green orientation-preserving edge $e^*$, and since $e^*$ crosses no green orientation-reversing edge, the other endpoint of $e^*$ is contained in $\{w_1,\dots,w_m\}$.  It follows that $m$ is even, so that the edges $e_1$ and $e_{m-1}$ are the same color, say red.  Note further that the valence of $w_1$ and $w_m$ in $\Gamma^*$ is two, while the valence of the other vertices is three.  Thus, every vertex of $\Gamma^*$ is the endpoint of both a red and green edge, so $\Gamma^*$ contains a red-green cycle.  Since $\Gamma^*$ is not all of $\Gamma$, we have that $\Gamma$ contains more than one red-green cycle, contradicting the assumption that $\Gamma$ is one-patch.  See Figure~\ref{oedge} for an example.

\begin{figure}[h!]
 \centering
  \includegraphics[width=.3\linewidth]{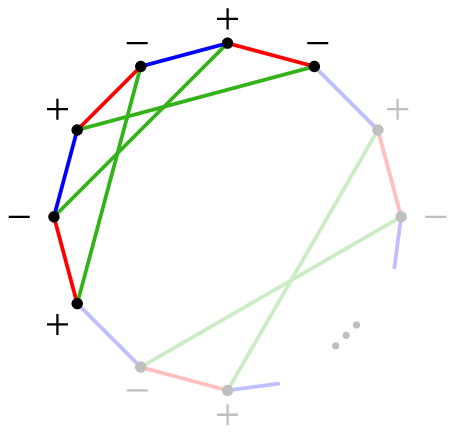}
\caption{If every c-compression of $\Gamma$ along a green edge yields an orientable graph, then a component of the subgraph $\Gamma^*$ of $\Gamma$ induced by the endpoints of green orientation-preserving edges contains a bi-colored cycle.}
\label{oedge}
\end{figure}

We are left with the case that every green edge of $\Gamma$ is orientation-reversing.  As noted above, there are an even number of such edges, so  $\Gamma$ has $4k$ vertices for some integer $k$.  Label the vertices of $\Gamma$ in order, $v_1,\dots,v_{4k}$.  Since every green edge must cross every other green edge, each green edge is a diameter of the red-blue cycle $C$, connecting $v_i$ to $v_{2k+i}$; otherwise, there would exist disjoint green edges.  Consider the green edges connecting $v_1$ to $v_{2k+1}$ and $v_2$ to $v_{2k+2}$.  Since the edges of $C$ alternate colors, it follows that the edge between $v_1$ and $v_2$ is the same color as the edge between $v_{2k+1}$ and $v_{2k+2}$.  Thus, $\Gamma$ contains a bi-colored cycle of length four.  It follows that $\Gamma =K_4$, the unique one-patch graph with four vertices.
\end{proof}

With this technical hurdle out the the way, we can swiftly prove the next lemma.

\begin{lemma}\label{totalcomp}
Suppose $\Gamma$ is a Tait-colored cubic graph
\be
\item If $\Gamma$ is not 1-patch, then $\Gamma$ can be reduced to a 1-patch graph by a finite number of p-compressions.
\item If $\Gamma$ is an orientable 1-patch graph, then $\Gamma$ can be reduced to the theta graph by a finite sequence alternating between t-compressions and p-compressions.
\item If $\Gamma$ is a nonorientable 1-patch graph, then $\Gamma$ can be reduced to the theta graph by a finite number of c-compressions.
\ee
\end{lemma}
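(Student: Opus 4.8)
The plan is to treat the three parts by induction, in each case stripping off one compression so as to decrease a natural complexity: the total number of bicolored cycles for (1), and the number of vertices for (2) and (3). The engine driving everything will be a precise description of how each type of compression changes the patch numbers, which I will index by color, writing $p_{rb}$, $p_{gr}$, $p_{gb}$ for the numbers of red--blue, green--red, and green--blue bicolored cycles of a Tait-colored cubic graph.

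The first step is this accounting. Fix a green edge $e$ with endpoints $u,w$; compression along $e$ deletes $u$, $w$, $e$, fuses the two red edges at $u,w$ into one red edge, and fuses the two blue edges at $u,w$ into one blue edge. Since $e$ is green, both $u$ and $w$ lie on the single green--red cycle containing $e$ and on the single green--blue cycle containing $e$, and the compression merely shortens each of those two cycles; hence $p_{gr}$ and $p_{gb}$ are unchanged by \emph{any} compression along a green edge. The coordinate $p_{rb}$ is where the three flavors differ. If $e$ is connecting (a p-compression), its endpoints lie on two distinct red--blue cycles, and deleting them and inserting the two new edges splices these cycles into one, so $p_{rb}$ drops by $1$. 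If instead both endpoints lie on a common red--blue cycle $C$ and $e$ has no parallel edge, then $u,w$ are non-adjacent on $C$; deleting them cuts $C$ into two arcs, and the two new edges reconnect the four loose ends. A short case analysis, matching the two combinatorial outcomes against the parity that distinguishes orientation-preserving from orientation-reversing edges, shows that a c-compression reassembles the arcs into a single cycle (so $p_{rb}$ is unchanged) whereas a t-compression closes each arc into its own cycle (so $p_{rb}$ increases by $1$). I will also use two small facts: no connecting edge and no orientation-reversing edge can be parallel to another edge (a parallel partner, of a necessarily different color, would place both endpoints on one red--blue cycle joined by a length-one path, forcing the edge to be orientation-preserving), so p- and c-compressions are admissible whenever they are needed; and connectedness is preserved throughout, since a compression along a green edge of a $1$-patch graph leaves the (still unique) green--red cycle spanning all surviving vertices, while a p-compression of a connected graph stays connected because the fused red--blue cycle reconnects the pieces left by deleting the two endpoints.

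Part (1) is then immediate: if $\Gamma$ is not $1$-patch, some patch number, say $p_{rb}$, exceeds $1$, and connectedness forces a green edge joining two distinct red--blue cycles; a p-compression along it lowers $p_{rb}$ while fixing $p_{gr}$ and $p_{gb}$, so $p_{rb}+p_{gr}+p_{gb}$ strictly decreases and finitely many p-compressions reach a $1$-patch graph. For part (2), let $\Gamma$ be orientable and $1$-patch but not the theta graph. Because $\Gamma$ is $1$-patch, every edge has both endpoints on the unique opposite-colored cycle, so no edge is connecting; and because $\Gamma$ is orientable (Lemma~\ref{orient}) no edge is orientation-reversing; hence every edge is orientation-preserving. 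Moreover $\Gamma$ has an edge with no parallel partner, for in any parallel pair at a vertex $v$ the third edge at $v$ cannot have one. I would pick such an edge $e$, say green, and t-compress along it; by Remark~\ref{rem:ortn} the result is orientable, and by the accounting it has patch numbers $(2,1,1)$ and two fewer vertices. Now part (1)'s mechanism applies: a green connecting edge exists, and a p-compression along it returns, again by Remark~\ref{rem:ortn}, an orientable $1$-patch graph with two fewer vertices still. Iterating this t-then-p step strictly decreases the vertex count and keeps the graph orientable and $1$-patch, and the only obstruction to running another step is already being the theta graph, so the process terminates there.

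Part (3) is the iteration of Proposition~\ref{ugh}. If $\Gamma$ is nonorientable and $1$-patch, that proposition furnishes a c-compression whose output is either the theta graph---and we are done---or nonorientable; since a c-compression preserves all three patch numbers, the output is again $1$-patch, so Proposition~\ref{ugh} applies to it as well, and each step removes two vertices, so after finitely many c-compressions we reach the theta graph. The main obstacle I anticipate is exactly the patch-number accounting of the second paragraph, and in particular the dichotomy that a c-compression preserves the $1$-patch condition while a t-compression destroys it---this is what forces part (2) to alternate t-compressions with p-compressions---together with the small but necessary check that the required edges (connecting for (1), orientation-preserving and non-parallel for (2)) are always available; the genuinely substantive input for the nonorientable case is already isolated in Proposition~\ref{ugh}.
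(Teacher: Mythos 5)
Your proposal is correct and follows essentially the same route as the paper: part (1) by induction on the total patch number via p-compressions along connecting edges, part (2) by alternating t- and p-compressions that drop the vertex count by four per round, and part (3) by iterating Proposition~\ref{ugh}, with c-compressions preserving the 1-patch condition. Your explicit bookkeeping of how each compression type changes $(p_{rb},p_{gr},p_{gb})$, and your checks on parallel edges and connectedness, are details the paper leaves implicit but are consistent with its argument.
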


\begin{proof}
First, suppose that $\Gamma$ is not 1-patch.  We induct on the sum $p_1 + p_2 + p_3$ of the patch numbers of $\Gamma$.  Suppose without loss of generality that $\Gamma$ contains at least two red-blue cycles.  Since $\Gamma$ is connected, there is some green edge $e_g$ connecting distinct red-blue cycles (and thus $e_g$ is not parallel to another edge).  Then an p-compression of $\Gamma$ along $e_g$ produces a new graph $\Gamma'$ with one fewer red-blue cycle than $\Gamma$.  Since no blue-green nor green-red cycles have been added, the claim holds by induction.

For the second part of the lemma, suppose $\Gamma$ is orientable and 1-patch.  Here we induct on the number of vertices of $\Gamma$.  Suppose that the claim is true for all orientable 1-patch graphs with fewer vertices than $\Gamma$.  By assumption, all edges of $\Gamma$ are orientation-preserving.  Choose one, and let $\Gamma'$ be the result of a t-compression of $\Gamma$ along it.  Note that t-compression decreases the number of vertices of $\Gamma$ by two but increases the patch number by one, so that $\Gamma'$ is not 1-patch, although $\Gamma'$ is spanned by a single bi-colored cycle, so it is still connected.  By the first step, $\Gamma'$ admits an p-compression yielding a new one-patch graph $\Gamma''$ with four fewer vertices than $\Gamma$.  Since t-compression and p-compression of an orientable graph yield another orientable graph (see Remark~\ref{rem:ortn}), the claim holds by induction.

Finally, suppose $\Gamma$ is a nonorientable 1-patch graph.  Again, we induct on the number of vertices of $\Gamma$.  Noting that c-compression of a 1-patch graph yields a 1-patch graph, we have by Proposition~\ref{ugh} that $\Gamma$ has a nonorientable edge such that c-compression along $e$ yields another nonorientable graph, and the third claim follows immediately.
\end{proof}

We remark that following the proof of Proposition~\ref{ugh}, we can take the sequence of c-compressions guaranteed by claim (3) of Lemma~\ref{totalcomp} to occur along edges of the same color.  As an example, seven c-compressions convert the nonorientable one-patch graph $\Gamma_N$ from Figure~\ref{ex1} to the theta graph.  This sequence of compressions is shown in Figure~\ref{nx}.

\begin{figure}[h!]
\begin{subfigure}{.24\textwidth}
  \centering
  \includegraphics[width=.9\linewidth]{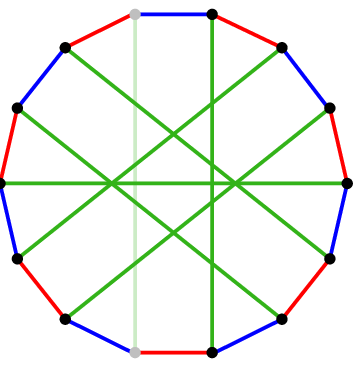}
  \label{nx1}
\end{subfigure}%
\begin{subfigure}{.24\textwidth}
  \centering
  \includegraphics[width=.9\linewidth]{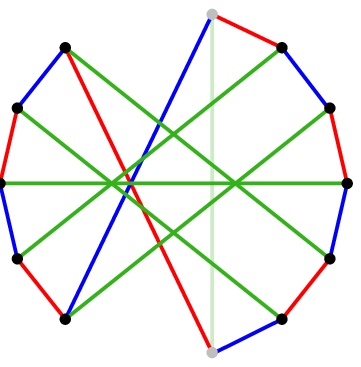}
  \label{nx2}
\end{subfigure}
\begin{subfigure}{.24\textwidth}
  \centering
  \includegraphics[width=.9\linewidth]{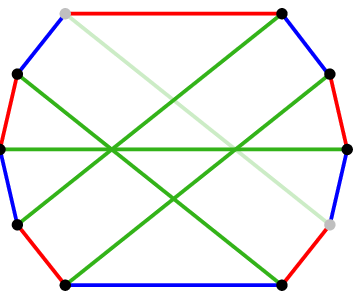}
  \label{nx3}
\end{subfigure}%
\begin{subfigure}{.24\textwidth}
  \centering
  \includegraphics[width=.9\linewidth]{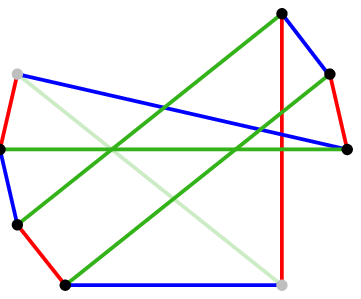}
  \label{nx4}
\end{subfigure}
\begin{subfigure}{.24\textwidth}
  \centering
  \includegraphics[width=.9\linewidth]{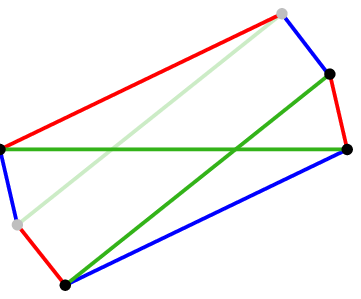}
    \label{nx5}
\end{subfigure}%
\begin{subfigure}{.24\textwidth}
  \centering
  \includegraphics[width=.9\linewidth]{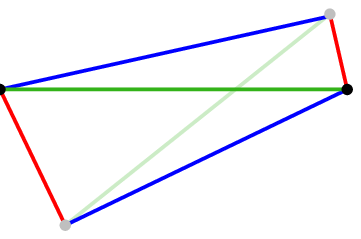}
  \label{nx6}
\end{subfigure}
\begin{subfigure}{.24\textwidth}
  \centering
  \includegraphics[width=.9\linewidth]{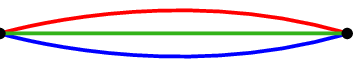}
  \label{nx7}
\end{subfigure}%
\caption{A sequence of c-compressions converting $\Gamma_N$ to the theta graph}
\label{nx}
\end{figure}

We can now prove the main theorem, which we restate here for convenience.

\begin{reptheorem}{main}
	If $\Gamma$ is a cubic graph with a Tait coloring $\mathcal C$, then there exists a bridge trisection $\mathcal T$ of an unknotted surface $\mathcal{U} \subset S^4$ such that the 1-skeleton $\mathcal T$ is graph isomorphic to $\Gamma$, with the coloring $\mathcal C$ induced by $\mathcal T$.  Moreover, if the induced surface $S$ is nonorientable, we may choose the embedding of $\mathcal{U}$ to have any possible normal Euler number.\end{reptheorem}
\begin{proof}
Suppose first that $\Gamma$ is an orientable 1-patch graph.  By Lemma~\ref{totalcomp}, there is a sequence of alternating t-compressions and p-compressions converting $\Gamma$ to the theta graph.  In the reverse direction, there is a sequence of alternating elementary perturbations and tubings performed on the 1-bridge trisection of the unknotted 2-sphere that cancel the t-compressions and p-compressions, so that the 1-skeleton of the resulting bridge trisection $\T$ of a surface $\K \subset S^4$ is isomorphic to $\Gamma$.  By Lemma~\ref{tuber}, the surface $\K$ is obtained from the unknotted 2-sphere by a sequence of 1-handle additions, and thus by repeated applications of Lemma~\ref{unknot}, we have that $\K$ is unknotted.

Next, suppose $\Gamma$ is a nonorientable 1-patch graph.  By Lemma~\ref{totalcomp}, there is a sequence of $n$ c-compressions converting $\Gamma$ to the theta graph.  In the reverse direction, a sequence of $n$ crosscap summations on the 1-bridge trisection of the unknotted 2-sphere cancel the c-compressions, yielding a bridge trisection $\T$ with 1-skeleton isomorphic to $\Gamma$.  By Lemma~\ref{crosscapping}, the knotted surface $\K$ corresponding to $\T$ is the connected sum of $n$ copies of $\F^+$ and $\F^-$, so $\K$ is unknotted.  Moreover, without affecting the induced graph, we can choose the number of each type of summand by picking between positive and negative crosscap summations.  Thus, with these choices we can construct $\K$ to have any possible normal Euler number between $-2n$ and $2n$, which are the only possible values by~\cite{Massey}.

Finally, if $\Gamma$ is not 1-patch, Lemma~\ref{totalcomp} asserts that $\Gamma$ reduces to a 1-patch graph $\Gamma'$ after a sequence of p-compressions.  By the previous steps, there exists a bridge trisection $\T'$ of an unknotted surface $\K$ such that the 1-skeleton of $\T'$ is isomorphic to $\Gamma'$, and if $\Gamma'$ is nonorientable, $\K$ can be chosen with any possible normal Euler number.  Then there is a sequence of elementary perturbations of $\T'$ canceling the sequence of p-compressions, yielding a bridge trisection $\T$ of the same unknotted surface $\K$, where the 1-skeleton of $\T$ is isomorphic to $\Gamma$, completing the proof.
\end{proof} 

Recall that for a bridge trisection $\mathcal T$, a choice of disks $E_{ij} \subset B_{ij}$ containing the bridge points, and a projection of the tangles $\tau_{ij}$ onto $E_{ij}$ with crossing data determines a tri-plane diagram $\Pau = (\Pau_{12},\Pau_{23},\Pau_{31})$ representing $\T$, and any two tri-plane diagrams $\Pau$ and $\Pau'$ for $\T$ are related by interior Reidemeister moves and mutual braid transpositions.  We now prove Corollary~\ref{changes}, which we restate for convenience.

\begin{repcorollary}{changes}
Every tri-plane diagram $\mathcal P$ of an knotted surface $\mathcal K \subset S^4$ can be converted to a tri-plane diagram $\mathcal P'$ for an unknotted surface $\mathcal U$ by a sequence of interior Reidemeister moves and crossing changes.
\end{repcorollary}

\begin{proof}
Suppose that $\T$ is a bridge trisection of an embedded surface $\K$ in $S^4$, where $\Gamma$ is the 1-skeleton of $\T$ with induced Tait coloring $\mathcal{C}$.  By Theorem~\ref{main}, there exists a bridge trisection $\T'$ of an unknotted surface $\K'$ whose Tait-colored 1-skeleton is (graph) isomorphic to $\Gamma$ and $\mathcal{C}$.  Let $\Pau$ and $\Pau'$ be tri-plane diagrams corresponding to $\T$ and $\T'$, respectively.  The graph isomorphism of the 1-skeleta of $\T$ and $\T'$ induces a bijection from the bridge points of $\Pau$ and $\Pau'$.  After performing some number of mutual braid transpositions on the tri-plane diagram $\Pau'$, we may assume that this bijection is the identity.

Viewing the tangles $\tau_{ij}$ and $\tau_{ij'}$ as being contained in the same 3-ball $B$, the graph bijection implies that $\tau_{ij}$ and $\tau_{ij'}$ are homotopic via a homotopy supported outside of a neighborhood of $\pd B$.  Using the projection disk $E_{ij}$, the (generic) projection of this homotopy yields a sequence of interior Reidemeister moves and crossing changes taking $\Pau_{ij}$ to $\Pau_{ij}'$.  Carrying out this process in each of the three sectors yields the corollary.
\end{proof}

\begin{remark}
\label{rmk:crossings}
	Although the end result of the the sequence of interior Reidemeister moves and crossing changes in the proof of Corollary~\ref{changes} yields a tri-plane diagram $\Pau'$, there is no reason to expect that any of the intermediate diagrams is a tri-plane diagram, since changing a single crossing likely destroys the condition that tangles pair to give diagrams of unlinks.
\end{remark}


\section{The examples $\Sigma_O$ and $\Sigma_N$}\label{examp}

We conclude by working through the details of Theorem~\ref{main} with the two examples from Figure~\ref{examples}.  First, the Tait-colored Heawood graph $\Sigma_O$ is orientable and one-patch, and thus by Lemma~\ref{totalcomp}, it admits an alternating sequence of t-compressions and p-compressions converting it to the theta graph.  These compressions are shown in Figure~\ref{ox}.

\begin{figure}[h!]
\begin{subfigure}{.24\textwidth}
  \centering
  \includegraphics[width=.9\linewidth]{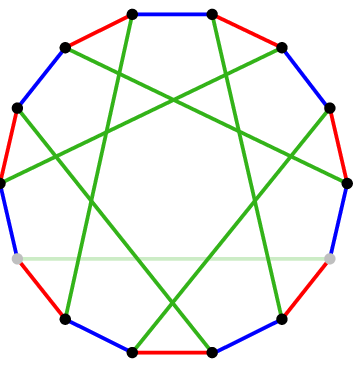}
  \label{ox1}
\end{subfigure}%
\begin{subfigure}{.24\textwidth}
  \centering
  \includegraphics[width=.9\linewidth]{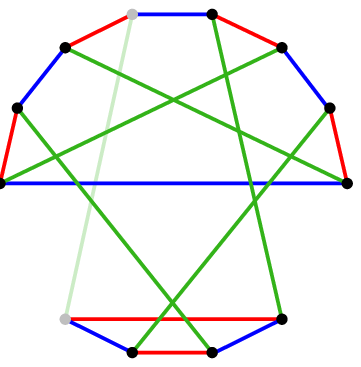}
  \label{ox2}
\end{subfigure}
\begin{subfigure}{.24\textwidth}
  \centering
  \includegraphics[width=.9\linewidth]{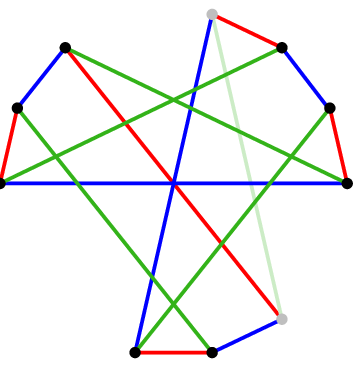}
  \label{ox3}
\end{subfigure}%
\begin{subfigure}{.24\textwidth}
  \centering
  \includegraphics[width=.9\linewidth]{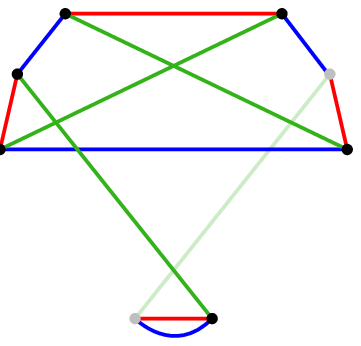}
  \label{ox4}
\end{subfigure}
\begin{subfigure}{.24\textwidth}
  \centering
  \includegraphics[width=.9\linewidth]{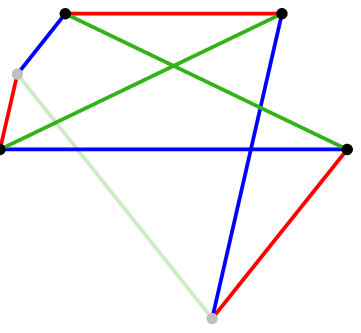}
    \label{ox5}
\end{subfigure}%
\begin{subfigure}{.24\textwidth}
  \centering
  \includegraphics[width=.9\linewidth]{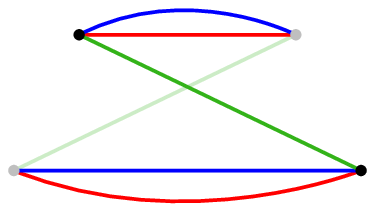}
  \label{ox6}
\end{subfigure}
\begin{subfigure}{.24\textwidth}
  \centering
  \includegraphics[width=.9\linewidth]{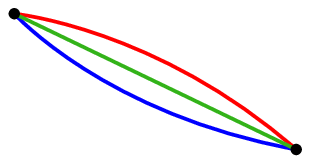}
  \label{ox7}
\end{subfigure}%
\caption{An sequence of t-compressions and p-compressions converting $\Gamma_O$ to the theta graph.}
\label{ox}
\end{figure}

At the level of bridge trisections, we can work our way backwards, starting with the 1-bridge splitting of the unknotted 2-sphere and performing an alternating sequence of elementary perturbations and tubings so that each subfigure of Figure~\ref{oxb} below is graph isomorphic to a corresponding graph in the sequence of compressions shown in Figure~\ref{ox}, in reverse order.

\begin{figure}[h!]
\begin{subfigure}{.24\textwidth}
  \centering
  \includegraphics[width=.9\linewidth]{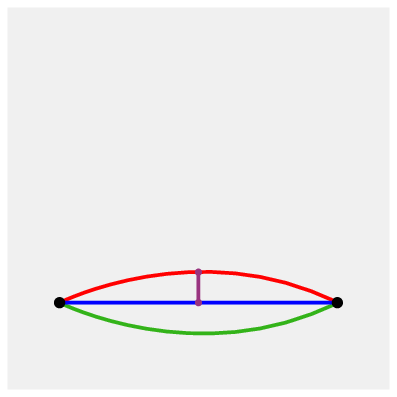}
  \label{oxb1}
\end{subfigure}%
\begin{subfigure}{.24\textwidth}
  \centering
  \includegraphics[width=.9\linewidth]{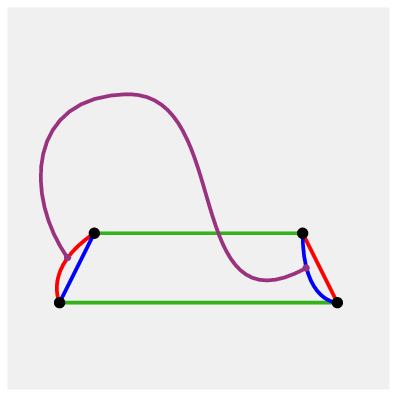}
  \label{oxb2}
\end{subfigure}
\begin{subfigure}{.24\textwidth}
  \centering
  \includegraphics[width=.9\linewidth]{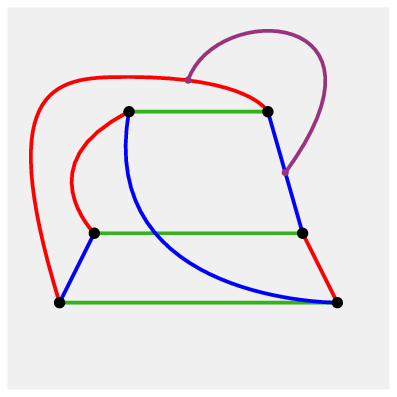}
  \label{oxb3}
\end{subfigure}%
\begin{subfigure}{.24\textwidth}
  \centering
  \includegraphics[width=.9\linewidth]{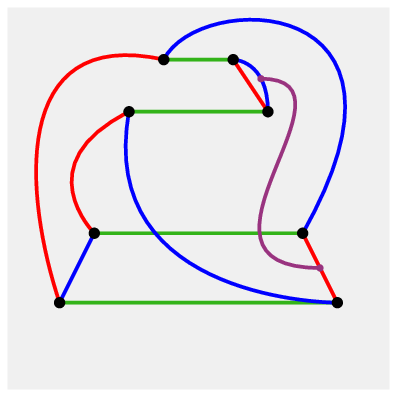}
  \label{oxb4}
\end{subfigure}
\begin{subfigure}{.24\textwidth}
  \centering
  \includegraphics[width=.9\linewidth]{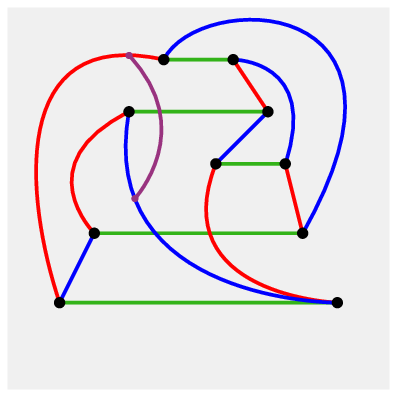}
    \label{oxb5}
\end{subfigure}%
\begin{subfigure}{.24\textwidth}
  \centering
  \includegraphics[width=.9\linewidth]{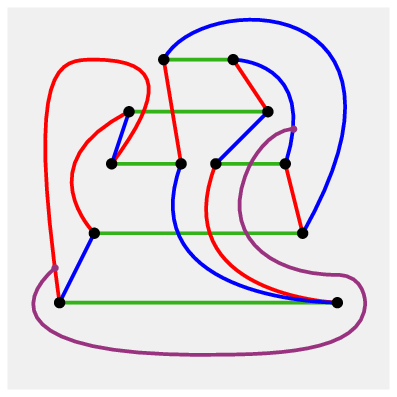}
  \label{oxb6}
\end{subfigure}
\begin{subfigure}{.24\textwidth}
  \centering
  \includegraphics[width=.9\linewidth]{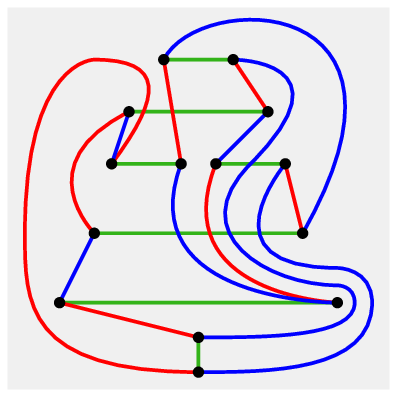}
  \label{oxb7}
\end{subfigure}%
\caption{A sequence of elementary perturbations and tubings performed on the 1-bridge trisection of the unknotted 2-sphere, the end result of which is a bridge trisection of an unknotted genus three surface with 1-skeleton isomorphic to the Heawood graph $\Sigma_O$.}
\label{oxb}
\end{figure}

Turning to the other example, recall that for the nonorientable graph $\Sigma_N$, Theorem~\ref{main} also allows us to choose the normal Euler number of the resulting unknotted surface.  In Figure~\ref{nxb}, we have chosen to reverse the compressions shown in Figure~\ref{nx} by performing three positive crosscap summations followed by three negative crosscap summations, so that the resulting surface $\mathcal{U}$ has normal Euler number zero.  There is an added layer of complexity in this example, since a crosscap summation necessarily introduces shadow arcs that cross each other, and before we perform the next crosscap summation, we are required to first carry out a sequence of shadow slides to convert the red and blue arcs to a standard pairing.

\begin{figure}[h!]
\begin{subfigure}{.24\textwidth}
  \centering
  \includegraphics[width=.9\linewidth]{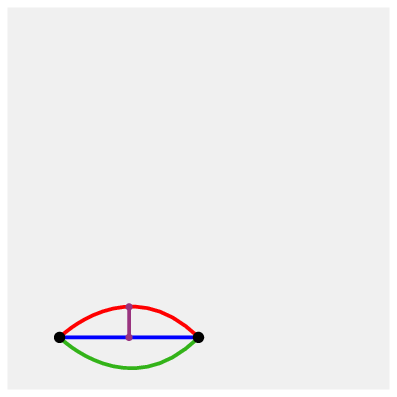}
  \label{nxb1}
\end{subfigure}%
\begin{subfigure}{.24\textwidth}
  \centering
  \includegraphics[width=.9\linewidth]{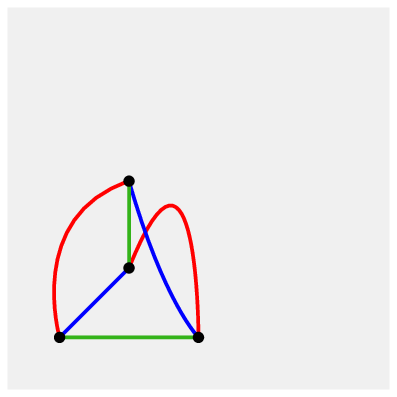}
  \label{nxb2}
\end{subfigure}
\begin{subfigure}{.24\textwidth}
  \centering
  \includegraphics[width=.9\linewidth]{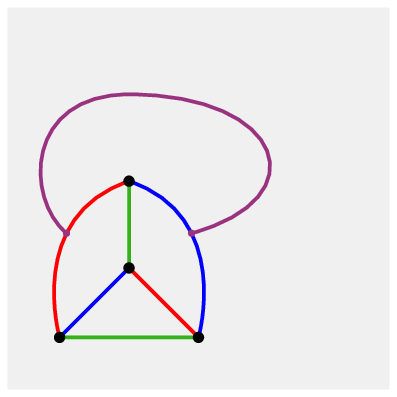}
  \label{nxb3}
\end{subfigure}%
\begin{subfigure}{.24\textwidth}
  \centering
  \includegraphics[width=.9\linewidth]{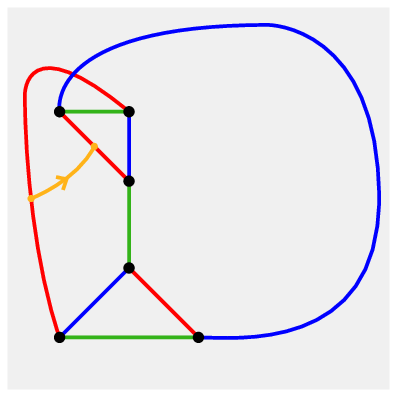}
  \label{nxb4}
\end{subfigure}
\begin{subfigure}{.25\textwidth}
  \centering
  \includegraphics[width=.9\linewidth]{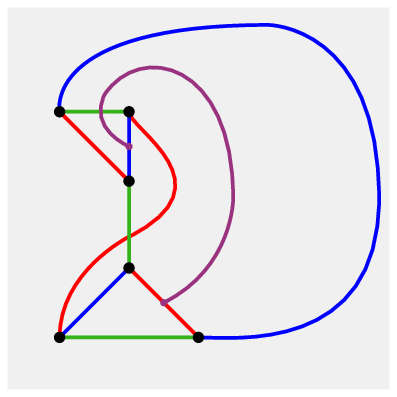}
    \label{nxb5}
\end{subfigure}%
\begin{subfigure}{.25\textwidth}
  \centering
  \includegraphics[width=.9\linewidth]{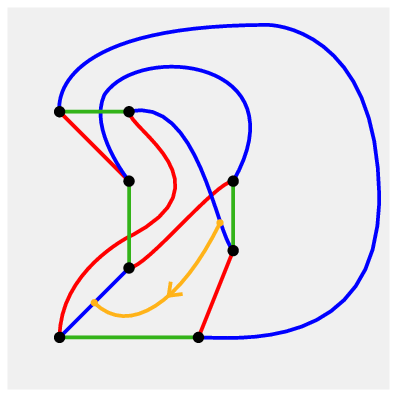}
  \label{nxb6}
\end{subfigure}
\begin{subfigure}{.25\textwidth}
  \centering
  \includegraphics[width=.9\linewidth]{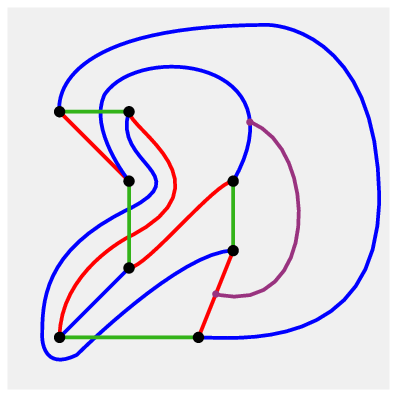}
  \label{nxb7}
  \end{subfigure}
  \begin{subfigure}{.25\textwidth}
  \centering
  \includegraphics[width=.9\linewidth]{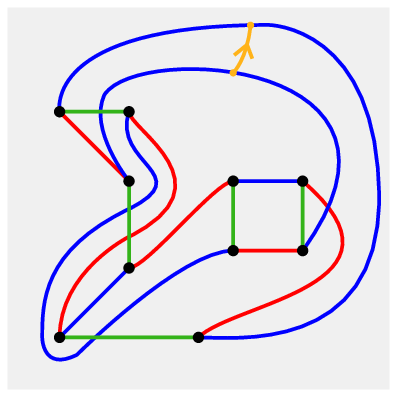}
  \label{nxb8}
\end{subfigure}
\begin{subfigure}{.25\textwidth}
  \centering
  \includegraphics[width=.9\linewidth]{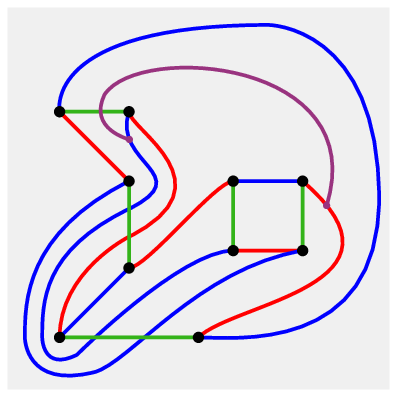}
  \label{nxb9}
\end{subfigure}%
\begin{subfigure}{.25\textwidth}
  \centering
  \includegraphics[width=.9\linewidth]{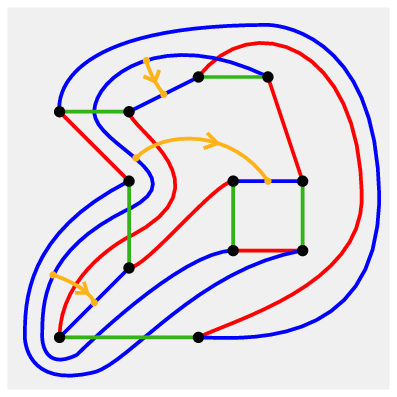}
  \label{nxb10}
\end{subfigure}
\begin{subfigure}{.25\textwidth}
  \centering
  \includegraphics[width=.9\linewidth]{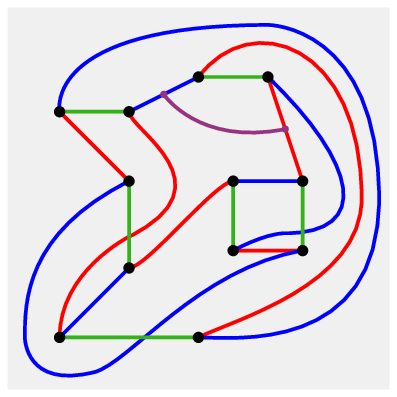}
    \label{nxb11}
\end{subfigure}%
\begin{subfigure}{.25\textwidth}
  \centering
  \includegraphics[width=.9\linewidth]{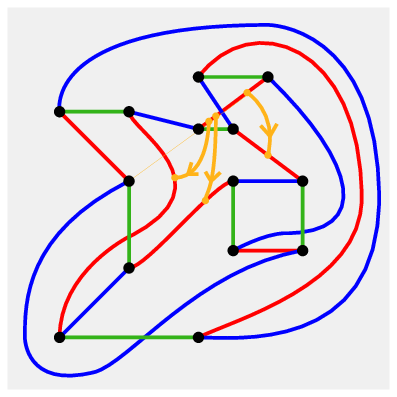}
  \label{nxb12}
\end{subfigure}
\begin{subfigure}{.25\textwidth}
  \centering
  \includegraphics[width=.9\linewidth]{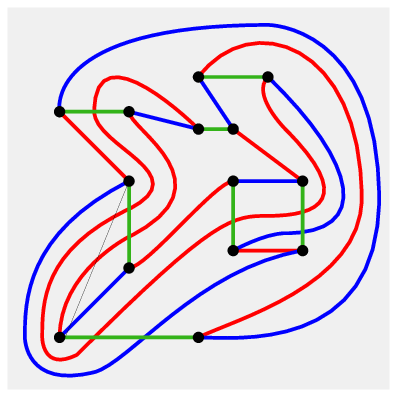}
  \label{nxb13}
\end{subfigure}
\caption{A sequence of moves, alternating between crosscap summations and shadow slides, starting with the 1-bridge trisection of the unknotted 2-sphere and ending with a bridge trisection of an unknotted nonorientable genus six surface with 1-skeleton isomorphic to $\Sigma_N$.}
\label{nxb}
\end{figure}

Finally, we convert the two diagrams with 1-skeleta $\Sigma_O$ and $\Sigma_N$ via diffeomorphisms to diagrams in which the red and blue arcs form a regular 14-gon.  The final results are shown in Figure~\ref{final}.

\begin{figure}[h!]
\begin{subfigure}{.5\textwidth}
  \centering
  \includegraphics[width=.7\linewidth]{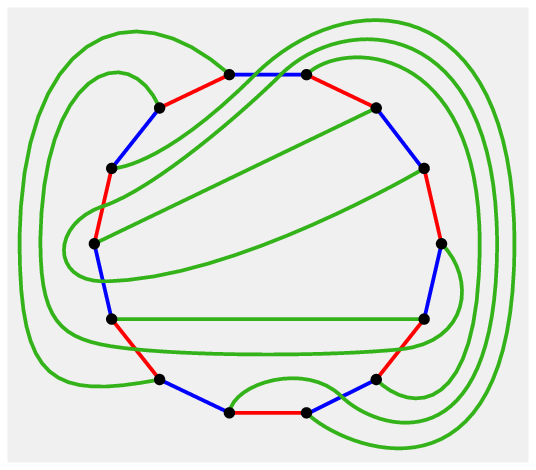}
  \label{ofinal}
\end{subfigure}%
\begin{subfigure}{.5\textwidth}
  \centering
  \includegraphics[width=.7\linewidth]{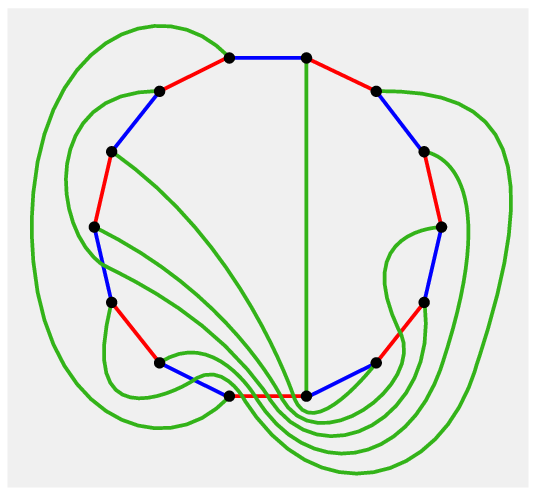}
  \label{nfinal}
\end{subfigure}
\caption{Versions of shadow diagrams of bridge trisections with 1-skeleta isomorphic to $\Sigma_O$ (left) and $\Sigma_N$ (right) in which the red-blue curve is drawn as a regular 14-gon.}
\label{final}
\end{figure}

\bibliographystyle{amsalpha}
\bibliography{cubic}

\end{document}